\newtheorem{thm}[equation]{Theorem}
\numberwithin{equation}{section}
\newtheorem{cor}[equation]{Corollary}
\newtheorem{rmk}[equation]{Remark}
\newtheorem{lem}[equation]{Lemma}
\newtheorem{prop}[equation]{Proposition}
\newtheorem{tab}[equation]{Table}
\newtheorem{fig}[equation]{Figure}
\begin{document}
\raggedbottom \voffset=-.7truein \hoffset=0truein \vsize=8truein
\hsize=6truein \textheight=8truein \textwidth=6truein
\baselineskip=18truept

\def\mapright#1{\ \smash{\mathop{\longrightarrow}\limits^{#1}}\ }
\def\mapleft#1{\smash{\mathop{\longleftarrow}\limits^{#1}}}
\def\mapup#1{\Big\uparrow\rlap{$\vcenter {\hbox {$#1$}}$}}
\def\mapdown#1{\Big\downarrow\rlap{$\vcenter {\hbox {$\ssize{#1}$}}$}}
\def\mapne#1{\nearrow\rlap{$\vcenter {\hbox {$#1$}}$}}
\def\mapse#1{\searrow\rlap{$\vcenter {\hbox {$\ssize{#1}$}}$}}
\def\mapr#1{\smash{\mathop{\rightarrow}\limits^{#1}}}
\def\ss{\smallskip}
\def\vp{v_1^{-1}\pi}
\def\at{{\widetilde\alpha}}
\def\sm{\wedge}
\def\la{\langle}
\def\ra{\rangle}
\def\on{\operatorname}
\def\ol#1{\overline{#1}{}}
\def\spin{\on{Spin}}
\def\cat{\on{cat}}
\def\lbar{\ell}
\def\qed{\quad\rule{8pt}{8pt}\bigskip}
\def\ssize{\scriptstyle}
\def\a{\alpha}
\def\bz{{\Bbb Z}}
\def\Rhat{\hat{R}}
\def\im{\on{im}}
\def\ct{\widetilde{C}}
\def\ext{\on{Ext}}
\def\sq{\on{Sq}}
\def\eps{\epsilon}
\def\ar#1{\stackrel {#1}{\rightarrow}}
\def\br{{\bold R}}
\def\bC{{\bold C}}
\def\bA{{\bold A}}
\def\bB{{\bold B}}
\def\bD{{\bold D}}
\def\bh{{\bold H}}
\def\bQ{{\bold Q}}
\def\bP{{\bold P}}
\def\bx{{\bold x}}
\def\bo{{\bold{bo}}}
\def\si{\sigma}
\def\Vbar{{\overline V}}
\def\dbar{{\overline d}}
\def\wbar{{\overline w}}
\def\Sum{\sum}
\def\tfrac{\textstyle\frac}
\def\tb{\textstyle\binom}
\def\Si{\Sigma}
\def\w{\wedge}
\def\equ{\begin{equation}}
\def\b{\beta}
\def\G{\Gamma}
\def\L{\Lambda}
\def\g{\gamma}
\def\k{\kappa}
\def\psit{\widetilde{\Psi}}
\def\tht{\widetilde{\Theta}}
\def\psiu{{\underline{\Psi}}}
\def\thu{{\underline{\Theta}}}
\def\aee{A_{\text{ee}}}
\def\aeo{A_{\text{eo}}}
\def\aoo{A_{\text{oo}}}
\def\aoe{A_{\text{oe}}}
\def\vbar{{\overline v}}
\def\endeq{\end{equation}}
\def\sn{S^{2n+1}}
\def\zp{\bold Z_p}
\def\cR{{\mathcal R}}
\def\P{{\mathcal P}}
\def\cQ{{\mathcal Q}}
\def\cj{{\cal J}}
\def\zt{{\bold Z}_2}
\def\bs{{\bold s}}
\def\bof{{\bold f}}
\def\bq{{\bold Q}}
\def\be{{\bold e}}
\def\Hom{\on{Hom}}
\def\ker{\on{ker}}
\def\kot{\widetilde{KO}}
\def\coker{\on{coker}}
\def\da{\downarrow}
\def\colim{\operatornamewithlimits{colim}}
\def\zphat{\bz_2^\wedge}
\def\io{\iota}
\def\Om{\Omega}
\def\Prod{\prod}
\def\e{{\cal E}}
\def\zlt{\Z_{(2)}}
\def\exp{\on{exp}}
\def\abar{{\overline a}}
\def\xbar{{\overline x}}
\def\ybar{{\overline y}}
\def\zbar{{\overline z}}
\def\Rbar{{\overline R}}
\def\nbar{{\overline n}}
\def\cbar{{\overline c}}
\def\qbar{{\overline q}}
\def\bbar{{\overline b}}
\def\et{{\widetilde E}}
\def\ni{\noindent}
\def\coef{\on{coef}}
\def\den{\on{den}}
\def\lcm{\on{l.c.m.}}
\def\vi{v_1^{-1}}
\def\ot{\otimes}
\def\psibar{{\overline\psi}}
\def\thbar{{\overline\theta}}
\def\mhat{{\hat m}}
\def\exc{\on{exc}}
\def\ms{\medskip}
\def\ehat{{\hat e}}
\def\etao{{\eta_{\text{od}}}}
\def\etae{{\eta_{\text{ev}}}}
\def\dirlim{\operatornamewithlimits{dirlim}}
\def\gt{\widetilde{L}}
\def\lt{\widetilde{\lambda}}
\def\st{\widetilde{s}}
\def\ft{\widetilde{f}}
\def\sgd{\on{sgd}}
\def\lfl{\lfloor}
\def\rfl{\rfloor}
\def\ord{\on{ord}}
\def\gd{{\on{gd}}}
\def\rk{{{\on{rk}}_2}}
\def\nbar{{\overline{n}}}
\def\MC{\on{MC}}
\def\lg{{\on{lg}}}
\def\cB{\mathcal{B}}
\def\cS{\mathcal{S}}
\def\cP{\mathcal{P}}
\def\N{{\Bbb N}}
\def\Z{{\Bbb Z}}
\def\Q{{\Bbb Q}}
\def\R{{\Bbb R}}
\def\C{{\Bbb C}}
\def\l{\left}
\def\r{\right}
\def\mo{\on{mod}}
\def\xt{\times}
\def\notimm{\not\subseteq}
\def\Remark{\noindent{\it  Remark}}
\def\kut{\widetilde{KU}}

\def\*#1{\mathbf{#1}}
\def\0{$\*0$}
\def\1{$\*1$}
\def\22{$(\*2,\*2)$}
\def\33{$(\*3,\*3)$}
\def\ss{\smallskip}
\def\ssum{\sum\limits}
\def\dsum{\displaystyle\sum}
\def\la{\langle}
\def\ra{\rangle}
\def\on{\operatorname}
\def\od{\text{od}}
\def\ev{\text{ev}}
\def\o{\on{o}}
\def\U{\on{U}}
\def\lg{\on{lg}}
\def\a{\alpha}
\def\bz{{\Bbb Z}}
\def\eps{\varepsilon}
\def\bc{{\bold C}}
\def\bN{{\bold N}}
\def\nut{\widetilde{\nu}}
\def\tfrac{\textstyle\frac}
\def\b{\beta}
\def\G{\Gamma}
\def\g{\gamma}
\def\zt{{\Bbb Z}_2}
\def\zth{{\bold Z}_2^\wedge}
\def\bs{{\bold s}}
\def\bx{{\bold x}}
\def\bof{{\bold f}}
\def\bq{{\bold Q}}
\def\be{{\bold e}}
\def\lline{\rule{.6in}{.6pt}}
\def\xb{{\overline x}}
\def\xbar{{\overline x}}
\def\ybar{{\overline y}}
\def\zbar{{\overline z}}
\def\ebar{{\overline \be}}
\def\nbar{{\overline n}}
\def\rbar{{\overline r}}
\def\Mbar{{\overline M}}
\def\et{{\widetilde e}}
\def\ni{\noindent}
\def\ms{\medskip}
\def\ehat{{\hat e}}
\def\what{{\widehat w}}
\def\Yhat{{\widehat Y}}
\def\nbar{{\overline{n}}}
\def\minp{\min\nolimits'}
\def\mul{\on{mul}}
\def\N{{\Bbb N}}
\def\Z{{\Bbb Z}}
\def\S{\Sigma}
\def\Q{{\Bbb Q}}
\def\R{{\Bbb R}}
\def\C{{\Bbb C}}
\def\notint{\cancel\cap}
\def\cS{\mathcal S}
\def\cR{\mathcal R}
\def\el{\ell}
\def\TC{\on{TC}}
\def\dstyle{\displaystyle}
\def\ds{\dstyle}

\def\zcl{\on{zcl}}
\def\Vb#1{{\overline{V_{#1}}}}

\def\Remark{\noindent{\it  Remark}}
\title
{$n$-dimensional Klein bottles}
\author{Donald M. Davis}
\address{Department of Mathematics, Lehigh University\\Bethlehem, PA 18015, USA}
\email{dmd1@lehigh.edu}
\date{June 19, 2017}

\keywords{Klein bottle, immersions, topological complexity, stable homotopy type}
\thanks {2000 {\it Mathematics Subject Classification}: 55M30, 55P15, 57R42.}

\maketitle
\begin{abstract}An $n$-dimensional analogue of the Klein bottle arose in our study of topological complexity of planar polygon spaces.
We determine its integral cohomology algebra and stable homotopy type, and give an explicit immersion and embedding in Euclidean space.
 \end{abstract}
\section{Introduction}\label{intro}
The space
\begin{equation}\label{def1}K_n=(S^1)^n/(z_1,\ldots,z_{n-1},z_n)\sim(\zbar_1,\ldots,\zbar_{n-1},-z_n)\end{equation}
arose naturally in the author's study of topological complexity of planar polygon spaces. The model
\begin{equation}\label{def3}K_n\approx ((S^1)^{n-1}\times I)/(z_1,\ldots,z_{n-1},0)\sim(\zbar_1,\ldots,\zbar_{n-1},1)\end{equation}
shows that $K_2$ is the Klein bottle, and $K_n$ is a natural generalization.
Here, of course, $\zbar$ denotes complex conjugation.
A homeomorphism from (\ref{def1}) to (\ref{def3}) is given by $$[(z_1\ldots,z_{n-1},e^{2\pi it_n})]\mapsto [(z_1,\ldots,z_{n-1},2t_n\ \text{mod}\ 1)].$$

The author wrote several papers, culminating in \cite{D2}, computing the topological complexity of the space $\Mbar(\ell)=\Mbar(\ell_1,\ldots,\ell_n)$ of planar polygons with side lengths $\ell_1,\ldots,\ell_n$, identified under isometry. For generic length vectors, this space is an $(n-3)$-manifold, and hence satisfies $\TC(\Mbar(\ell))\le 2n-5$. See \cite{HK} and \cite{F}. Using its mod-2 cohomology algebra, we showed that it is usually true that $\TC(\Mbar(\ell))\ge 2n-6$, within 1 of optimal. In fact, the only planar $n$-gon spaces which are known to have $\TC(\Mbar(\ell))<2n-6$ are those which are homeomorphic to $RP^{n-3}$ (for many values of $n$) or the torus $T^{n-3}$. We feel that planar polygon spaces which are homeomorphic to the spaces $K_{n-3}$ studied here are the best candidates for another such example. We elaborate on this in Section \ref{TCsec}, but have not yet made any advances in this direction.

 In Section \ref{sec3}, we compute $H^*(K_n;\zt)$ as an algebra over the Steenrod algebra, the algebra $H^*(K_n;\Z)$, and $\pi_1(K_n)$.
In Section \ref{sec4}, we  determine the span and immersion and embedding dimensions of these manifolds, and give an explicit immersion of $K_n$ in $\R^{n+1}$, analogous to the familiar picture of a Klein bottle. There is an interesting dependence of the span and embedding dimension of $K_n$ on the parity of $n$.
In Section \ref{shtsec}, we show that $\S K_n$ has the homotopy type of a wedge of spheres and mod-2 Moore spaces.

One might think to consider the related space $K_{n,r}:=(S^1)^n/(z_1,\ldots,z_n)\sim(w_1,\ldots,w_n)$, where $w_i=\begin{cases}\zbar_i&i\le r\\ -z_i&i>r\end{cases}$ for some $r<n-1$. However, this would not be interesting due to the following proposition.
 \begin{prop} The space $K_{n,r}$ defined above is homeomorphic to $K_{r+1}\times (S^1)^{n-r-1}$.\end{prop}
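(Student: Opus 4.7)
The plan is to exhibit an explicit coordinate change on the total space $(S^1)^n$ that conjugates the defining involution of $K_{n,r}$ into a product involution, one factor of which acts trivially.

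Write $T\colon (S^1)^n\to (S^1)^n$ for the involution
$$T(z_1,\ldots,z_n)=(\zbar_1,\ldots,\zbar_r,-z_{r+1},-z_{r+2},\ldots,-z_n),$$
so $K_{n,r}=(S^1)^n/T$. The key observation is that while $T$ negates each of $z_{r+1},\ldots,z_n$, the ratio $z_j z_{r+1}^{-1}$ for $j>r+1$ satisfies
$$(-z_j)(-z_{r+1})^{-1}=z_j z_{r+1}^{-1},$$
so these ratios are $T$-invariant. This suggests introducing the self-homeomorphism $\phi\colon (S^1)^n\to (S^1)^n$ defined by
$$\phi(z_1,\ldots,z_n)=(z_1,\ldots,z_r,z_{r+1},z_{r+2}z_{r+1}^{-1},\ldots,z_n z_{r+1}^{-1}),$$
whose inverse is $(u_1,\ldots,u_{r+1},v_1,\ldots,v_{n-r-1})\mapsto (u_1,\ldots,u_{r+1},v_1u_{r+1},\ldots,v_{n-r-1}u_{r+1})$. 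One checks directly that $\phi$ is a homeomorphism of $(S^1)^n$, now regarded as $(S^1)^{r+1}\times (S^1)^{n-r-1}$ via the factorization into $(u_1,\ldots,u_{r+1})$ and $(v_1,\ldots,v_{n-r-1})$.

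The next step is a short computation: conjugating $T$ by $\phi$ yields the involution
$$\phi\circ T\circ \phi^{-1}(u_1,\ldots,u_{r+1},v_1,\ldots,v_{n-r-1})=(\zbar u_1,\ldots,\zbar u_r,-u_{r+1},v_1,\ldots,v_{n-r-1}),$$
which is visibly the product of the defining involution of $K_{r+1}$ on the first $r+1$ factors with the identity on the last $n-r-1$ factors. Passing to quotients, $\phi$ descends to a homeomorphism
$$K_{n,r}\cong K_{r+1}\times (S^1)^{n-r-1},$$
as required.

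The main obstacle is simply to spot the correct coordinate change; once $\phi$ is written down, everything is a direct verification. I do not anticipate any technical difficulty beyond bookkeeping, since $\phi$ is a polynomial homeomorphism on compact Hausdorff spaces and the quotient by a $\Z/2$-action commutes with taking products with a trivial factor.
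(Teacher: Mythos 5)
Your proof is correct and takes essentially the same approach as the paper: your coordinate change $\phi$ descends on quotients to exactly the homeomorphism $h[(z_1,\ldots,z_n)]=([(z_1,\ldots,z_{r+1})],(z_{r+1}^{-1}z_{r+2},\ldots,z_{r+1}^{-1}z_n))$ that the paper writes down, with your conjugation of the involution upstairs serving as the well-definedness check. The computation $\phi\circ T\circ\phi^{-1}=(\text{Klein involution})\times(\text{identity})$ is verified correctly.
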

 \begin{proof} There are inverse homeomorphisms $h:K_{n,r}\to K_{r+1}\times (S^1)^{n-r-1}$ and $h':K_{r+1}\times (S^1)^{n-r-1}\to K_{n,r}$ defined by
 $$h[(z_1,\ldots,z_n)]=([(z_1,\ldots,z_{r+1})],(z_{r+1}^{-1}z_{r+2},\ldots,z_{r+1}^{-1}z_n))$$
 and
 $$h'([(z_1,\ldots,z_{r+1})],(z_{r+2},\ldots,z_n))=[(z_1,\ldots,z_{r+1},z_{r+1}z_{r+2},\ldots,z_{r+1}z_n)].$$
 \end{proof}

\section{Cohomology and fundamental group of $K_n$}\label{sec3}
We begin by determining $H^*(K_n;\zt)$ as an algebra. The following lemma is useful. We thank J.-C.Hausmann for discussions about this lemma.
\begin{lem} \label{lem1}Suppose $M$ is a space with free involution $\tau$, with quotient $\Mbar$. Let $X=(S^1\times M)/(z,x)\sim(\zbar,\tau(x))$. There is an algebra isomorphism
$$H^*(X;\zt)\approx H^*(\Mbar;\zt)[y]/(y^2=w_1y),$$
where $|y|=1$ and $w_1\in H^1(\Mbar;\zt)$ classifies the double cover $M\to\Mbar$.\end{lem}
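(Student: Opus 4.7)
The plan is to identify $X$ as the unit sphere bundle $S(L\oplus\epsilon)$ of a rank-$2$ real vector bundle on $\Mbar$, where $L$ is the line bundle associated to the double cover $M\to\Mbar$ (so $w_1(L)=w_1$) and $\epsilon$ is the trivial line bundle. Indeed, $L\oplus\epsilon\cong M\times_{\zt}\R^2$ with $\zt$ acting by $(a,b)\mapsto(-a,b)$, and its unit sphere $M\times_{\zt}S^1$ is identified with $X$ after conjugating the reflection action on $S^1$ to complex conjugation by a rotation. Thus $\pi\colon X\to\Mbar$ is a fiber bundle with fiber $S^1$ admitting two sections $s_\pm\colon[m]\mapsto[\pm 1,m]$, well-defined since $\overline{\pm 1}=\pm 1$.

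Next, I would run the Serre spectral sequence for this fibration with $\zt$ coefficients. The local coefficient system on $H^*(S^1;\zt)$ is trivial mod $2$, and the transgression $d_2$ of the fiber generator equals the mod-$2$ Euler class $w_2(L\oplus\epsilon)=0$, so the spectral sequence collapses at $E_2$. By Leray--Hirsch there exists a class $y\in H^1(X;\zt)$ restricting to the generator on each fiber such that $\{1,y\}$ is a free $H^*(\Mbar;\zt)$-module basis for $H^*(X;\zt)$; normalize $y$ by requiring $s_+^*(y)=0$.

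For the ring structure, write $y^2=\pi^*(a)\cdot y+\pi^*(b)$ with unique $a\in H^1(\Mbar;\zt)$ and $b\in H^2(\Mbar;\zt)$. Applying $s_+^*$ gives $b=(s_+^*(y))^2=0$. To identify $a$, take $y$ to be the first Stiefel--Whitney class of the line bundle associated to the self-covering $q\colon X\to X$ sending $[u,m]\mapsto[u^2,m]$, whose fiberwise restriction is the connected double cover of $S^1$. A direct pullback computation shows that $s_+^*q$ is the trivial double cover of $\Mbar$ while $s_-^*q$ is the double cover classified by $w_1$; hence $s_-^*(y)=w_1$, and applying $s_-^*$ to $y^2=\pi^*(a)\cdot y$ yields $w_1^2=a\cdot w_1$ in $H^2(\Mbar;\zt)$.

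The main obstacle is concluding $a=w_1$ from the equation $aw_1=w_1^2$, which need not force $a=w_1$ when $w_1$ is a zero-divisor in $H^*(\Mbar;\zt)$. This is resolved by naturality: the double cover $M\to\Mbar$ is classified by a map $f\colon\Mbar\to RP^\infty$, and the entire construction of $X$, the sections $s_\pm$, and the class $y$ pull back from the universal case ($M_{\text{univ}}=S^\infty$ with antipodal $\zt$-action). In the universal case, $H^*(RP^\infty;\zt)=\zt[w_1]$ is a polynomial ring in which $w_1$ is a nonzero-divisor, forcing $a_{\text{univ}}=w_1$; pulling back along $f$ then gives $a=f^*(w_1)=w_1$. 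Hence $y^2=w_1 y$ as claimed.
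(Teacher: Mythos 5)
Your argument is correct, and it reaches the key relation $y^2=w_1y$ by a genuinely different route than the paper. Both proofs start from the same geometric identification of $X$ as the circle bundle of $L\oplus\eps$ over $\Mbar$ with a section, and both get the additive structure (free $H^*(\Mbar;\zt)$-module on $1,y$) from essentially equivalent tools (your Serre/Leray--Hirsch collapse versus the paper's split Gysin/Thom sequence). Where you diverge is the multiplicative step. The paper's proof is a two-line Steenrod-algebra argument: since $|y|=1$, $y^2=\sq^1y$, and under the splitting $y$ corresponds to the Thom class $U$, so $\sq^1y$ corresponds to $\sq^1U=w_1(\theta)\cup U$, giving $y^2=w_1y$ immediately with no zero-divisor issue to worry about. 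You instead realize $y$ concretely as the $w_1$ of the fiberwise squaring cover $q\colon X\to X$, evaluate $y^2=\pi^*(a)y+\pi^*(b)$ on the two sections $s_\pm$ (your computations $s_+^*y=0$ and $s_-^*y=w_1$ both check out), and then correctly notice that $aw_1=w_1^2$ does not pin down $a$ in general; your resolution by pulling the entire construction back from the universal example over $RP^\infty$, where $w_1$ is a nonzero-divisor, is legitimate because $q$ and the sections are visibly natural in the double cover. The trade-off: the paper's route is shorter but leans on the Wu/Thom formula $\sq^1U=w_1U$ and on compatibility of the Gysin splitting with $\sq^1$; yours is longer but more elementary and self-identifying (it exhibits $y$ as an explicit covering class and never invokes Steenrod operations). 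Your instinct to flag and repair the zero-divisor gap is exactly the kind of care this step needs if one avoids the $\sq^1$ shortcut.
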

\begin{proof} The space $X$ is the sphere bundle of the 2-plane bundle $\theta$ over $\Mbar$ given by
$$(\R\times \R\times M)/(s,t,x)\sim(s,-t,\tau(x))\to \Mbar.$$
There is a cofiber sequence $S(\theta)\mapright{j}D(\theta)\to T(\theta)$ with $S(\theta)=X$, $D(\theta)\simeq \Mbar$, and a section $s:\Mbar\to X$ defined by $s([x])=[(1,0,x)]$. Thus there is a split SES
$$0\to H^*(\Mbar)\mapright{j^*}H^*(X)\to H^{*+1}(T\theta)\to 0,$$
with all coefficients in this proof being $\zt$. Let $y\in H^1(X)$ correspond to the Thom class $U\in H^2(T\theta)$ under this splitting. Then $y^2=\sq^1y$ corresponds to $\sq^1U=w_1(\theta)\cup U$. Since, as a real bundle, $\theta$ is isomorphic to the sum of a trivial bundle and the line bundle associated to the double cover, we obtain $y^2=w_1y$. Using the Thom isomorphism, we obtain the ring isomorphism
$$H^*X\approx H^*\Mbar\oplus H^*\Mbar\cdot y$$
with $y^2=w_1y$.\end{proof}

The following result was obtained in a much different form and by much different methods in \cite{HK}.
\begin{thm} There is an algebra isomorphism
$$H^*(K_n;\zt)\approx\zt[R,V_1,\ldots,V_{n-1}]/(R^2,V_i^2+RV_i),$$
with $|R|=|V_i|=1$.\label{cohthm}
\end{thm}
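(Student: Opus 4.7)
The plan is to proceed by induction on $n$, peeling off one circle factor at each step via Lemma \ref{lem1}. For the base case $n=1$, the space $K_1 = S^1/(z\sim -z)$ is homeomorphic to $S^1$ (via $z \mapsto z^2$), so $H^*(K_1;\zt) = \zt[R]/(R^2)$ matches the claimed formula (with no $V_i$'s).

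For the inductive step, I would write $K_n = (S^1 \times M)/(z_1, z') \sim (\zbar_1, \tau(z'))$ with $M = (S^1)^{n-1}$ and $\tau(z_2,\ldots,z_n) = (\zbar_2,\ldots,\zbar_{n-1},-z_n)$. The involution $\tau$ is free on $M$ because $-z_n = z_n$ has no solution on $S^1$, and $M/\tau = K_{n-1}$. Lemma \ref{lem1} then yields
$$H^*(K_n;\zt) \approx H^*(K_{n-1};\zt)[y]/(y^2 + w_1 y),$$
where $w_1 \in H^1(K_{n-1};\zt)$ classifies the double cover $(S^1)^{n-1} \to K_{n-1}$. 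By the inductive hypothesis $H^*(K_{n-1};\zt) = \zt[R,V_1,\ldots,V_{n-2}]/(R^2, V_i^2 + RV_i)$; setting $V_{n-1} := y$, the desired presentation for $H^*(K_n;\zt)$ will follow as soon as $w_1$ is identified with $R$.

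The crux of the argument is precisely this identification $w_1 = R$. I would carry it out by observing that the projection $M \to S^1$ onto the last coordinate is $\zt$-equivariant (with $\zt$ acting antipodally on the target), so it descends to a map $\bar p : K_{n-1} \to S^1/\{\pm 1\} \cong S^1$; a direct check on fibers exhibits $(S^1)^{n-1} \to K_{n-1}$ as the pullback along $\bar p$ of the standard antipodal double cover $S^1 \to S^1$. Hence $w_1 = \bar p^{\,*}(\iota)$, where $\iota$ generates $H^1(S^1;\zt)$. To match this with $R$, I would strengthen the inductive hypothesis to stipulate that $R$ is by definition the pullback of $\iota$ under the last-coordinate map $K_n \to S^1$; consistency up the induction follows because this map factors as $K_n \to K_{n-1} \xrightarrow{\bar p} S^1$ through the bundle projection of Lemma \ref{lem1}. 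The main obstacle is the identification $w_1 = R$; the pullback-square viewpoint and the $\zt$-equivariance of the last-coordinate projection are the geometric inputs that make it clean, and the remainder is routine bookkeeping with Lemma \ref{lem1}.
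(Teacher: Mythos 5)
Your proposal is correct and follows essentially the same route as the paper: induction on $n$ via Lemma \ref{lem1}, with the only substantive point being the identification of the class $w_1$ classifying the double cover with $R$. The paper verifies this by exhibiting $(S^1)^n\to K_n$ as a pullback of $(S^1)^{n-1}\to K_{n-1}$, whereas you pull everything back from the antipodal double cover of $S^1$ along the last-coordinate map; these are interchangeable verifications of the same compatibility.
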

\begin{proof} This follows by induction on $n$ from Lemma \ref{lem1} with $M=(S^1)^{n-1}$, $\Mbar=K_{n-1}$, and $X=K_n$. The only additional ingredient required is to know that the class $R$ which classifies the double cover $(S^1)^{n-1}\to K_{n-1}$ pulls back to the similar class for $(S^1)^n\to K_n$. This follows from the fact that there is a pullback diagram
$$\begin{CD} (S^1)^n@>p>> (S^1)^{n-1}\\
@VVq_nV @VVq_{n-1}V\\
K_n@>\overline{p}>> K_{n-1}.\end{CD}$$
Here $p(w_1,\ldots,w_n)=(w_1,\ldots,w_{n-2},w_n)$, and $\overline{p}$ is defined similarly.
This pullback property is proved by noting that a  point in the pullback is
$$([z_1,\ldots,z_n],(w_1,\ldots,w_{n-2},w_n)),$$
where $[z_1,\ldots,z_n]=[\zbar_1,\ldots,\zbar_{n-1},-z_n]$, such that
$$(z_1,\ldots,z_{n-2},z_n)=(w_1,\ldots,w_{n-2},w_n)\text{ or }(\wbar_1,\ldots,\wbar_{n-2},-w_n).$$
Such a point is uniquely described as $(z_1,\ldots,z_n)$ if $z_n=w_n$, or $(\zbar_1,\ldots,\zbar_{n-2},-z_n)$ if $z_n=-w_n$.
\end{proof}

One corollary is the precise value of the (reduced) Lusternik-Schnirelmann category $\cat(K_n)$.
\begin{cor} For $n\ge2$, we have $\cat(K_n)=n$.\end{cor}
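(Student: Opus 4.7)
The plan is to sandwich $\cat(K_n)$ between the standard cup-length lower bound and the dimension upper bound. Recall (in the reduced convention, where $\cat(\text{pt})=0$) that for a connected CW complex $X$ one has
$$\text{cuplength}_{\zt}(X)\le \cat(X)\le \dim X,$$
where the cup-length is the largest $k$ for which there exist positive-degree classes $u_1,\ldots,u_k$ with $u_1\cdots u_k\ne0$.

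For the upper bound, the model (\ref{def3}) realizes $K_n$ as a closed smooth $n$-manifold, hence as a CW complex of dimension $n$. This immediately gives $\cat(K_n)\le n$.

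For the lower bound, I would take the $n$ one-dimensional generators of $H^*(K_n;\zt)$ supplied by Theorem \ref{cohthm}, namely $R,V_1,\ldots,V_{n-1}$, and show that their product $RV_1V_2\cdots V_{n-1}$ is nonzero. In the algebra $\zt[R,V_1,\ldots,V_{n-1}]/(R^2,V_i^2+RV_i)$ the relation $V_i^2=RV_i$ forces $V_i^3=V_i\cdot RV_i=RV_i^2=R^2V_i=0$, so every element is a $\zt$-linear combination of the $2^n$ monomials
$$R^\eps V_1^{a_1}\cdots V_{n-1}^{a_{n-1}},\qquad \eps,a_i\in\{0,1\}.$$
To see these monomials are linearly independent (and not merely a spanning set), I would iterate Lemma \ref{lem1}, which gives $\dim_\zt H^*(K_n;\zt)=2\dim_\zt H^*(K_{n-1};\zt)$; starting from $\dim_\zt H^*(K_1;\zt)=2$, induction yields $\dim_\zt H^*(K_n;\zt)=2^n$. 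Since the number of listed monomials matches the dimension, they form a basis, and in particular $RV_1\cdots V_{n-1}$ is a nonzero class of degree $n$. Thus $\text{cuplength}_{\zt}(K_n)\ge n$, so $\cat(K_n)\ge n$.

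Combining the two inequalities gives $\cat(K_n)=n$. There is no real obstacle in this argument; the only step with any content is the nonvanishing of $RV_1\cdots V_{n-1}$, and the dimension count coming from the inductive application of Lemma \ref{lem1} settles it without any direct manipulation in the quotient ring.
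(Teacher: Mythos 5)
Your proof is correct and follows the same route as the paper: the cup-length lower bound applied to the $n$-fold product $RV_1\cdots V_{n-1}$ from Theorem \ref{cohthm}, and the dimension upper bound from $K_n$ being an $n$-manifold. The paper simply cites the nonvanishing of the $n$-fold product as immediate from the algebra isomorphism, whereas you verify it via the dimension count from Lemma \ref{lem1}; both are fine.
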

\begin{proof} Since $H^*(K_n;\zt)$ has an $n$-fold nontrivial cup product, $n\le\cat(K_n)$ by \cite[Prop 1.5]{LS}, and since $K_n$ is an $n$-manifold, $\cat(K_n)\le n$ by \cite[Thm 1.7]{LS}.\end{proof}

Also, we have the following immediate corollary, the entire $A$-module structure.
\begin{cor} In $H^*(K_n;\zt)$, for $j>0$ and distinct subscripts of $V$,
$$\sq^j(R^\eps V_{i_1}\cdots V_{i_r})=\begin{cases} RV_{i_1}\cdots V_{i_r}&j=1,\ \eps=0,\ r\text{ odd}\\
0&\text{otherwise.}\end{cases}$$\label{sqcor}
\end{cor}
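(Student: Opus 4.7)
The proof is essentially a direct computation using the Cartan formula, exploiting the fact that every generator of $H^*(K_n;\zt)$ lies in degree $1$.

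First I would record the basic Steenrod action on generators: for each degree-$1$ class $x\in\{R,V_1,\ldots,V_{n-1}\}$ we have $\sq^0x=x$, $\sq^1x=x^2$, and $\sq^jx=0$ for $j\ge 2$. From Theorem \ref{cohthm} this specializes to $\sq^1R=R^2=0$ and $\sq^1V_i=V_i^2=RV_i$.

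The Cartan formula applied to a product $x_1\cdots x_k$ of degree-$1$ classes then collapses to a sum over subsets:
$$\sq^j(x_1\cdots x_k)=\sum_{\substack{I\subseteq\{1,\ldots,k\}\\|I|=j}}\prod_{i\in I}x_i^2\cdot\prod_{i\notin I}x_i,$$
because each $x_i$ contributes either $\sq^0x_i=x_i$ or $\sq^1x_i=x_i^2$ to a term.

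For the monomial $V_{i_1}\cdots V_{i_r}$ (i.e.\ $\eps=0$), each squared factor $V_{i_k}^2$ becomes $RV_{i_k}$, so every term in the sum equals $R^j V_{i_1}\cdots V_{i_r}$; since $R^2=0$, only $j=1$ can survive, and then there are $r$ equal terms, giving $rR V_{i_1}\cdots V_{i_r}$, which is $RV_{i_1}\cdots V_{i_r}$ when $r$ is odd and $0$ when $r$ is even. For the monomial $R V_{i_1}\cdots V_{i_r}$ (i.e.\ $\eps=1$), any subset $I$ that includes the $R$-factor kills the term because $\sq^1R=0$, so only subsets $I\subseteq\{i_1,\ldots,i_r\}$ contribute, and each surviving term picks up an extra factor of $R$, producing $R^{j+1}V_{i_1}\cdots V_{i_r}$, which vanishes for every $j\ge 1$.

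There is no genuine obstacle here; the only thing to be careful about is bookkeeping the two factors of $R$ (one from $\eps=1$, one from each squared $V_{i_k}$) so that $R^2=0$ is applied correctly. The corollary follows immediately from the two case computations above.
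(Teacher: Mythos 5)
Your proof is correct and follows essentially the same route as the paper: the Cartan formula for a product of degree-one classes, together with $\sq^1V_i=V_i^2=RV_i$, $\sq^1R=R^2=0$, so that the $j=1$ case gives $rR^{\eps+1}V_{i_1}\cdots V_{i_r}$ and every term for $j>1$ is divisible by $R^2=0$. The paper states this in two lines; your version just makes the subset-sum bookkeeping explicit.
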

\begin{proof} Since $R^2=0$, $\sq^1(R^\eps V_{i_1}\cdots V_{i_r}) =rR^{\eps+1}V_{i_1}\cdots V_{i_r}$. Action of $\sq^j$ for $j>1$ will have all terms divisible by $R^2=0$.\end{proof}

This result suggested the possibility of a splitting of $\S K_n$, which we prove in Theorem \ref{split}.

The depiction of $H^*(K_4;\zt)$ in Table \ref{K4} might be helpful. The horizontal lines indicate the action of $\sq^1$.

\begin{tab}\label{K4}

\begin{center}
\begin{tabular}{crl}

$H^0$&$H^1\ \quad H^2$&$\quad H^3\quad\qquad H^4$\\
\hline
$1$&$R\quad\qquad$&\\
&$V_1-RV_1$&\\
&$V_2-RV_2$&\\
&$V_3-RV_3$&\\
&$V_1V_2$&$RV_1V_2$\\
&$V_1V_3$&$RV_1V_3$\\
&$V_2V_3$&$RV_2V_3$\\
&&$V_1V_2V_3-RV_1V_2V_3$

\end{tabular}
\end{center}
\end{tab}

Theorem \ref{cohthm} is used later to prove Proposition \ref{zclprop}, which is used for lower bounds of topological complexity.

Denote by $\L_\cR$ the exterior algebra over a ring $\cR$ on a set of generators, with superscript $\od$ (resp.~$\ev$) (resp.~$k$) referring to the subspace spanned by products of an odd (resp.~even) number of (resp.~$k$) generators.
\begin{thm}\label{cohalg} There are elements $R$ and $Z_i$ of grading 1 such that there is an isomorphism of graded rings
$$H^*(K_n;\Z)\approx \L_\Z^\ev[Z_1,\ldots,Z_{n-1}]\oplus R\cdot\L_\Z^\ev[Z_1,\ldots,Z_{n-1}]\oplus R\cdot\L_{\zt}^\od[Z_1,\ldots,Z_{n-1}],$$
with $R^2=0$ and products of elements in the first summand with all others as in the exterior algebra.
\end{thm}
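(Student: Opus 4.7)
The plan is to compute $H^*(K_n;\Z)$ inductively using the sphere-bundle structure from Lemma~\ref{lem1}. Write $K_n=S(\theta)$ where $\theta=\epsilon\oplus L$ is the rank-$2$ bundle over $K_{n-1}$, with $L$ the real line bundle classified by $R\in H^1(K_{n-1};\zt)$. The bundle $\theta$ carries a nowhere-vanishing section $[x]\mapsto[(1,0,x)]$ in its $\epsilon$-summand, so its twisted Euler class vanishes in $H^2(K_{n-1};\Z_R)$, where $\Z_R$ denotes the integer local system with orientation character $R$.

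Consequently the integer Gysin sequence for the sphere bundle $K_n\to K_{n-1}$ degenerates, splitting via the section into short exact sequences
$$0 \to H^i(K_{n-1};\Z) \to H^i(K_n;\Z) \to H^{i-1}(K_{n-1};\Z_R) \to 0.$$
Hence additively $H^*(K_n;\Z)\cong H^*(K_{n-1};\Z) \oplus y\cdot H^{*-1}(K_{n-1};\Z_R)$, where $y\in H^1(K_n;\Z)$ plays the role of the new exterior generator introduced at this stage. The twisted cohomology $H^*(K_{n-1};\Z_R)$ is computed inductively via the anti-invariant subcomplex of $C^*(T^{n-1};\Z)$ under the free deck-transformation action (since $T^{n-1}\to K_{n-1}$ is the double cover classified by $R$); this identifies $H^*(K_{n-1};\Z_R)$ with the correct new-generator summand.

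Unpacking the induction, the first summand $H^i(K_{n-1};\Z)$ reproduces the exterior-algebra structure on variables $Z_1,\ldots,Z_{n-2}$, while the new twisted summand contributes both the new free classes (involving $y$, identified with $Z_{n-1}$) and the new $\Z/2$-torsion classes (including $R Z_{n-1}$). The ring structure follows from Lemma~\ref{lem1}'s mod-$2$ relation $y^2=w_1(\theta)y=Ry$, combined with collapse of the mod-$2$ Bockstein spectral sequence (whose $E_2$-page, computed from Corollary~\ref{sqcor}, has ranks matching the claimed decomposition, thereby ruling out higher $2$-torsion). The main obstacle is the bookkeeping needed to identify the twisted summand $H^{*-1}(K_{n-1};\Z_R)$ correctly with its free and torsion components through the induction; this can be handled either via the anti-invariant cochain calculation sketched above or, where applicable, via Poincar\'e--Lefschetz duality.
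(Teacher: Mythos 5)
Your additive computation is a genuinely different and workable route: where the paper runs the Wang/Mayer--Vietoris sequence of the mapping-torus description $K_n=(T^{n-1}\times I)/(x,0)\sim(h(x),1)$, so that everything reduces in one step to $h^*-1=(-1)^r-1$ on $H^r(T^{n-1};\Z)$, you induct on $n$ through the twisted Gysin sequence of $S(\eps\oplus L)\to K_{n-1}$. That is legitimate, but it costs you a second inductive quantity: you must carry $H^*(K_m;\Z_R)$ alongside $H^*(K_m;\Z)$ through the induction (the twisted Gysin sequence, using $\Z_R\ot\Z_R\cong\Z$, closes the loop), and you only gesture at this. One concrete slip in the bookkeeping: there is no new class $y\in H^1(K_n;\Z)$. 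Since $H^0(K_{n-1};\Z_R)=0$ (the local system has no invariants), your own short exact sequence gives $H^1(K_n;\Z)\cong H^1(K_{n-1};\Z)=\Z\la R\ra$; the twisted summand only begins contributing in degree $2$. This is exactly the paper's remark that ``single classes $Z_s$ do not exist,'' and it undercuts your plan of treating $y=Z_{n-1}$ as an integral exterior generator.

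The serious gap is the ring structure. The relation $y^2=Ry$ is a mod-$2$ statement, and mod-$2$ information plus collapse of the Bockstein spectral sequence determines the additive structure and some torsion products, but it does not determine the integral products of the free classes $Z_S$, which is the actual content of the theorem. Worse, the mod-$2$ relation $V_i^2=RV_i$ actively obstructs the naive conclusion: any integral lift of $X_1X_2$ reduces to $V_1V_2+R\ell$ with $\ell\in\L^1_{\zt}$, so
$$\rho(Z_{12})\rho(Z_{13})=V_1^2V_2V_3+R(\cdots)=RV_1V_2V_3+R(\cdots),$$
and for the natural (symmetric, $p_{S,n}$-compatible) choice of lifts this equals the nonzero class $RV_1V_2V_3\in H^4(K_4;\zt)$, forcing $Z_{12}Z_{13}\ne0$ integrally. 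So products of free classes with overlapping index sets land in nonzero torsion rather than vanishing as in an exterior algebra; this is precisely the point your argument (and, frankly, the paper's one-line justification via $q$ and $\rho$) must confront head-on, and the Bockstein spectral sequence cannot see it. To pin down the multiplication you need at minimum the pair of detecting ring maps the paper uses --- restriction to the fiber torus together with mod-$2$ reduction, plus naturality under the projections $p_{S,n}$ --- and an explicit treatment of the overlapping-support products.
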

\begin{proof}
We use the description of $K_n$ in (\ref{def3}). If $h:X\to X$ is a homeomorphism and
$\widetilde X=X\times I/(x,0)\sim(h(x),1)$, then a Mayer-Vietoris argument shows that there is an exact sequence, with any coefficients,
$$\to H^r(\widetilde X)\to H^r(X) \mapright{h^*-1} H^r(X)\to H^{r+1}(\widetilde X)\to.$$
This can be obtained by letting $A=X\times(0,1)$ and $B=\widetilde X-(X\times\{\frac12\})$. Then $H^*(A)\oplus H^*(B)\to H^*(A\cap B)$ becomes
$$H^*(X)\oplus H^*(X)\to H^*(X)\oplus H^*(X)$$
with connecting homomorphism $\bigl(\begin{smallmatrix} 1&1\\1&h^*\end{smallmatrix}\bigr)$, of which the kernel and cokernel are the same as that of $h^*-1$ on $H^*(X)$.

In our case, $X=T^{n-1}$ and $(h^*-1)$ on $H^r(T^{n-1})$ is multiplication by $(-1)^r-1$.
We obtain commutative diagrams of exact sequences in which $X_i\mapsto V_i$.
$$\begin{CD}0@>>> \L^{2k-1}_{\zt}[X_1,\ldots,X_{n-1}]@>\delta>> H^{2k}(K_n;\Z)@>q>>\L^{2k}_\Z[X_1,\ldots,X_{n-1}]@>>>0\\
@. @V\rho_2VV @V\rho VV @VVV @.\\
0@>>> \L^{2k-1}_{\zt}[V_1,\ldots,V_{n-1}]@>\delta'>> H^{2k}(K_n;\zt)@>>>\L^{2k}_{\zt}[V_1,\ldots,V_{n-1}]@>>>0\end{CD}$$
\medskip

$$\begin{CD}0@>>> \L^{2k}_{\Z}[X_1,\ldots,X_{n-1}]@>\delta>> H^{2k+1}(K_n;\Z)@>>>0\\
@. @VVV @V\rho VV @VVV \\
0@>>> \L^{2k}_{\zt}[V_1,\ldots,V_{n-1}]@>\delta'>> H^{2k+1}(K_n;\zt)@>>>\L^{2k+1}_{\zt}[V_1,\ldots,V_{n-1}]@>>>0\end{CD}$$
The homomorphisms $\delta'$ are multiplication by $R$ in Theorem \ref{cohthm}.

The exact sequences show clearly that the abelian group structure of $H^*(K_n;\Z)$ is as claimed. Some care is required to show that the product structure is, too.

If $S=\{s_1,\ldots,s_\ell\}\subset[\![n-1]\!]=\{1,\ldots,n-1\}$, there is a natural map $p_{S,n}:K_n\to K_{\ell+1}$ sending $[(z_1,\ldots,z_{n-1},t)]\mapsto [(z_{s_1},\ldots,z_{s_\ell},t)]$. The induced cohomology homorphisms are compatible with the above diagrams, and are injective.

For all $m$, $H^1(K_m;\Z)=\Z$, generated compatibly by $R=\delta(1)$ in the second diagram. If $m$ is odd, $K_m$ is orientable by Proposition \ref{tang}. Let $Z_{[\![m-1]\!]}\in H^{m-1}(K_m;\Z)$ denote the cap product of  an orientation class with $R$ which satisfies $q(Z_{[\![m-1]\!]})=X_1\cdots X_{m-1}$ in the first diagram. Thus our orientation class is $R\cdot Z_{[\![m-1]\!]}$.

For $S=\{s_1,\ldots,s_{2k}\}\subset [\![n-1]\!]$ and $\epsilon\in\{0,1\}$, let $R^\epsilon Z_S\in H^{2k+\epsilon}(K_n;\Z)$ equal $p_{S,n}^*(R^\epsilon Z_{[\![2k]\!]})$. This class is what we will call
$R^\epsilon Z_{s_1}\cdots Z_{s_{2k}}$, once we establish the multiplicative structure. Note that single classes $Z_s$ do not exist.

These classes satisfy the multiplicative structure of an exterior algebra (e.g., $Z_{i,j}Z_{k,\ell}=-Z_{i,k}Z_{j,\ell}$ if $i<j<k<\ell$) since they do when $q$ or $\rho$ is applied in the first diagram, and $\rho_2$ is bijective. Thus we rename them as $R^\epsilon Z_{s_1}\cdots Z_{s_{2k}}$; they comprise the first two summands in the statement of the theorem.

The remaining classes are in $\im(\delta)$ in the first diagram. Since these classes have order 2, the product formulas involving them and (perhaps) the $\Z$ classes above are implied by Theorem \ref{cohthm}.
\end{proof}

We can use a combination of the Atiyah-Hirzebruch spectral sequence and the exact sequences used in the cohomology proof above to obtain similar results for the ring structure of $KU^*(K_n)$ and $KO^*(K_n)$, but the results are not particularly surprising or useful. Theorem \ref{split} is also helpful. For example, the ring $KO^*(K_4)$ is isomorphic to
\begin{eqnarray*}&&KO^*\la R,X_1X_2,X_1X_3,X_2X_3,RX_1X_2,RX_1X_3,RX_2X_3\ra\\
&\oplus&KO^*(M^0(2))\la RX_1,RX_2,RX_3,RX_1X_2X_3\ra,\end{eqnarray*}
where $R$ and $X_i$ have grading 1.

The fundamental group of $K_n$ is a straightforward generalization of that of the Klein bottle.
 \begin{prop} The fundamental group $\pi_1(K_n)$ has generators $a_1,\ldots,a_n$ with relations $a_ja_n=a_na_j^{-1}$, $1\le j\le n-1$, and $a_ia_j=a_ja_i$, $1\le i<j\le n-1$. The double cover $p:T^n\to K_n$ satisfies $p_*(g_j)=\begin{cases}a_j&j<n\\ a_n^2&j=n.\end{cases}$.\end{prop}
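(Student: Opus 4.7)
The plan is to exploit the mapping torus model (\ref{def3}), which exhibits $K_n$ as the mapping torus of the involution $\tau\colon T^{n-1}\to T^{n-1}$ defined by coordinatewise complex conjugation, $\tau(z_1,\ldots,z_{n-1})=(\zbar_1,\ldots,\zbar_{n-1})$. The standard fact that the mapping torus of a self-homeomorphism $f$ of a path-connected space $Y$ has fundamental group $\pi_1(Y)\rtimes_{f_*}\Z$ (for instance via the long exact sequence of the resulting fiber bundle over $S^1$, or by a van Kampen argument) then gives $\pi_1(K_n)\cong\Z^{n-1}\rtimes_{\tau_*}\Z$, with the new generator $a_n$ (the loop going once around the $I$-direction in (\ref{def3})) conjugating by $\tau_*$. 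Since complex conjugation on $S^1$ reverses the generator of $\pi_1(S^1)$, we have $\tau_*(a_j)=a_j^{-1}$ for $1\le j\le n-1$, so $a_na_ja_n^{-1}=a_j^{-1}$, which rearranges to $a_ja_n=a_na_j^{-1}$. The commutation relations $a_ia_j=a_ja_i$ for $1\le i<j\le n-1$ are inherited directly from the abelian group $\pi_1(T^{n-1})$.

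For the covering map statement, I would track a standard loop representing each generator $g_j$ of $\pi_1(T^n)=\Z^n$ through $p$, working in the model (\ref{def1}). For $j<n$, the loop $t\mapsto(1,\ldots,e^{2\pi it},\ldots,1)$ (with $e^{2\pi it}$ in the $j$-th slot) is carried by $p$ to the analogous loop in $K_n$, which represents the generator $a_j$ in the mapping-torus model after the coordinate change. The essential case is $j=n$: the loop $t\mapsto(1,\ldots,1,e^{2\pi it})$ in $T^n$ has image $[1,\ldots,1,e^{2\pi it}]\in K_n$ in model (\ref{def1}), and the explicit homeomorphism $[(z_1,\ldots,z_{n-1},e^{2\pi it_n})]\mapsto[(z_1,\ldots,z_{n-1},2t_n\bmod 1)]$ identifies this with the loop that traverses the $I$-direction twice in model (\ref{def3}). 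Hence $p_*(g_n)=a_n^2$.

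The main obstacle is purely bookkeeping: one must reconcile the two models (\ref{def1}) and (\ref{def3}) carefully so that the factor of $2$ in the $n$-th coordinate is visible; without this step one is tempted to write $p_*(g_n)=a_n$. A minor technical point is fixing the convention for the semidirect product, but since $\tau$ is an involution $\tau_*=\tau_*^{-1}$, and the presentation is unaffected by whether $a_n$ or $a_n^{-1}$ implements the monodromy. As a sanity check, in the base case $n=2$ one recovers the standard Klein bottle presentation $\langle a_1,a_2\mid a_1a_2=a_2a_1^{-1}\rangle$ together with the usual relation $p_*(g_2)=a_2^2$ for the double cover $T^2\to K_2$.
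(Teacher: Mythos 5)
Your argument is correct, and it identifies the group by a different mechanism than the paper does. You compute $\pi_1(K_n)$ directly as the fundamental group of the mapping torus of coordinatewise conjugation on $T^{n-1}$, obtaining the semidirect product $\Z^{n-1}\rtimes_{\tau_*}\Z$ from the fibration over $S^1$ (or van Kampen); this hands you the full presentation at once, with no possibility of further hidden relations. The paper instead works in model (\ref{def1}): it defines explicit loops $f_j$ (with $f_n(t)=[(1^{n-1},e^{\pi it})]$ traversing half of the last circle), checks that they satisfy the stated relations by the same pictures as for the Klein bottle and the torus, and then concludes that the presented group is all of $\pi_1(K_n)$ by a covering-space index count: $p_*$ embeds $\pi_1(T^n)=\Z^n$ as an index-$2$ subgroup of both the presented group and $\pi_1(K_n)$. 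Your route makes the completeness of the relation set transparent, at the cost of having to translate between models (\ref{def1}) and (\ref{def3}) to see $p_*(g_n)=a_n^2$ --- a point you handle correctly via the explicit homeomorphism, and which the paper sidesteps by taking $a_n$ to be the half-loop in model (\ref{def1}) from the outset. Your observation that the involution property $\tau_*=\tau_*^{-1}$ makes the semidirect-product convention immaterial is exactly the right point to flag.
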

 \begin{proof} Using model (\ref{def1}) for $K_n$, let $a_j=[f_j]$, where $f_j:I\to K_n$ is defined by
 $$f_j(t)=\begin{cases}[(1^{j-1},e^{2\pi i t},1^{n-j})]&j<n\\
 [(1^{n-1},e^{\pi it})]&j=n.\end{cases}$$
 The homotopy between $f_jf_n$ and $f_n\overline{f_j}$ is exactly as in the Klein bottle, and the commuting of $a_i$ and $a_j$ follows from that in the torus. Since $p_*$ is an isomorphism from $\pi_1(T^n)$ to an index-2 subgroup of the group described, this group must equal $\pi_1(K_n)$.\end{proof}

\section{Span, immersions, and embeddings of $K_n$}\label{sec4}
In this section, we show that if $n$ is odd, $K_n$ is parallelizable and embeds in $\R^{n+1}$, while if $n$ is even, it has $n-1$ linearly independent vector fields. For all $n$, we obtain an explicit immersion of $K_n$ in $\R^{n+1}$ and embedding in $\R^{n+2}$, analogous to the familiar picture of a Klein bottle.

We begin with the following result for the tangent bundle.
\begin{prop}\label{tang} For $k>0$, the Stiefel-Whitney classes of the tangent bundle of $K_n$ are given by
$$w_k(\tau(K_n))=\begin{cases}R&k=1,\ n\text{ even}\\ 0&\text{otherwise.}\end{cases}$$
\end{prop}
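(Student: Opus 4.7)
My plan is to identify the tangent bundle of $K_n$ as a sum of line bundles by lifting to the universal cover $T^n$, using the fact that $T^n$ is parallelizable and that the $\Z/2$-action whose quotient is $K_n$ also acts on the tangent bundle.

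First I would parametrize $T^n$ by angles $\theta_1,\ldots,\theta_n$ (with $z_j=e^{i\theta_j}$) and use the canonical framing by the vector fields $\partial/\partial\theta_j$ to trivialize $TT^n=T^n\times\R^n$. The free involution $\sigma(z_1,\ldots,z_n)=(\bar z_1,\ldots,\bar z_{n-1},-z_n)$ has the effect $\theta_j\mapsto-\theta_j$ for $j<n$ and $\theta_n\mapsto\theta_n+\pi$, so its differential acts on the fiber $\R^n$ diagonally as $\mathrm{diag}(-1,\ldots,-1,+1)$. Passing to the quotient, we obtain
$$\tau(K_n)\cong (n-1)L\oplus\eps,$$
where $L$ is the real line bundle associated to the double cover $T^n\to K_n$ (i.e., the bundle whose nontrivial $\Z/2$-action on the fiber $\R$ is multiplication by $-1$) and $\eps$ is a trivial line bundle.

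Next, by the identification established in the proof of Theorem \ref{cohthm}, the class $R\in H^1(K_n;\zt)$ is precisely the one classifying this double cover, so $w_1(L)=R$ and the total Stiefel-Whitney class is
$$w(\tau(K_n))=(1+R)^{n-1}.$$
Now I invoke the relation $R^2=0$ from Theorem \ref{cohthm}: all terms of degree $\ge 2$ in the binomial expansion vanish, so
$$w(\tau(K_n))=1+(n-1)R,$$
which equals $1+R$ when $n$ is even and $1$ when $n$ is odd. This immediately yields the claimed formulas for $w_k$ (including the vanishing of $w_k$ for all $k\ge2$).

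The only delicate point, which I would treat as the main technical step, is the identification in the first paragraph: verifying that the associated line bundle of the double cover coming out of this tangent-bundle splitting matches the class $R$ in the algebra presentation of $H^*(K_n;\zt)$. This is immediate once one traces the definition of $R$ through the inductive step of Lemma \ref{lem1} (where $y=R$ is pinned down as the pullback of the Thom class of the $2$-plane bundle whose orientation local system is exactly the double cover). Everything else is routine binomial arithmetic modulo the relation $R^2=0$.
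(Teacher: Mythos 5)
Your proof is correct, but it takes a genuinely different route from the paper's. The paper proves Proposition \ref{tang} purely cohomologically, via Wu's formula $w_k(\tau(M))=\sum_j\sq^{k-j}v_j$: by Corollary \ref{sqcor} the only nonzero Wu class in positive degree is $v_1=R$ when $n$ is even, and the formula is immediate. You instead identify the tangent bundle itself: lifting to the parallelized $T^n$ and noting that the differential of the covering involution acts on the canonical framing as $\mathrm{diag}(-1,\ldots,-1,+1)$, you get $\tau(K_n)\cong(n-1)L\oplus\eps$ with $L$ the line bundle of the double cover, whence $w(\tau)=(1+R)^{n-1}=1+(n-1)R$ since $R^2=0$. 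This is essentially the content of the paper's subsequent Theorem \ref{tangbdl} (which exhibits the same total space for $\tau(K_n)$ in model (\ref{def3}) and then pairs up the $-1$'s by rotations to get triviality for $n$ odd); so your argument front-loads the geometric bundle decomposition and derives the Stiefel--Whitney classes from it, while the paper extracts them from the Steenrod action alone and only afterwards upgrades to the bundle statement. Your identification $w_1(L)=R$ is legitimately pinned down by the pullback diagram in the proof of Theorem \ref{cohthm}, and there is no circularity, since you use only the mod $2$ cohomology algebra, not the orientability statement that the paper later deduces from Proposition \ref{tang}. The trade-off: your approach yields strictly more (the explicit splitting into line bundles), whereas the paper's Wu-formula argument requires no geometric model of the tangent bundle at all.
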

\begin{proof} We use Wu's formula, as given in \cite[Thm 11.4]{MS}, which states that, for an $n$-manifold $M$, if $v_j$ denotes the $j$th Wu class, which satisfies $v_j\cup x=\sq^jx$ for all $x\in H^{n-j}(M)$, then $w_k(\tau(M))=\ds\sum_j\sq^{k-j}v_j$. Since, using Corollary \ref{sqcor}, for $j>0$, $v_j=0$ in $H^*(K_n)$ unless $j=1$ and $n$ is even, in which case $v_1=R$, the result follows since only $\sq^0$ acts nontrivially on $R$.\end{proof}

This leads us to the following stronger result.
\begin{thm}\label{tangbdl} If $n$ is odd, the tangent bundle $\tau(K_n)$ is isomorphic to a trivial bundle. If $n$ is even, $\tau(K_n)\approx\eta\oplus(n-1)\eps$,  where $\eta$ is a line bundle with $w_1=R$, and $2\tau(K_n)$ is isomorphic to a trivial bundle.\end{thm}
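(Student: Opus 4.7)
The plan is to describe $\tau(K_n)$ directly from the quotient presentation $K_n=T^n/\sigma$, where $T^n=(S^1)^n$ and $\sigma(z_1,\ldots,z_n)=(\zbar_1,\ldots,\zbar_{n-1},-z_n)$ is a free involution. First I would trivialize $\tau(T^n)$ by left-invariant vector fields. Since $\sigma$ factors as the group automorphism $(z_1,\ldots,z_n)\mapsto(\zbar_1,\ldots,\zbar_{n-1},z_n)$ followed by translation by $(1,\ldots,1,-1)$, and translations act trivially on the left-invariant trivialization, the differential $d\sigma$ is the constant matrix $\on{diag}(-1,\ldots,-1,+1)$ in each fiber. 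Decomposing $\R^n$ into the $(-1)$- and $(+1)$-eigenspaces of $d\sigma$ and passing to quotients gives
$$\tau(K_n)\approx (n-1)L\oplus\eps,$$
where $L=(T^n\times\R)/((z,v)\sim(\sigma z,-v))$ is the real line bundle on $K_n$ classifying the double cover $T^n\to K_n$, so $w_1(L)=R$.

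The key lemma is then that $2L$ is trivial. I would argue this by complexifying: the associated complex line bundle $L_\C=(T^n\times\C)/((z,w)\sim(\sigma z,-w))$ has underlying real bundle $2L$. The formula $(z_1,\ldots,z_n)\mapsto z_n$ defines a $\zt$-equivariant map $T^n\to\C^*$, the source action being $\sigma$ and the target action being multiplication by $-1$; this gives a nowhere-zero section of $L_\C$. Therefore $L_\C$ is trivial as a complex line bundle, and in particular $2L\approx 2\eps$ as a real bundle.

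The theorem follows by parity. If $n$ is odd then $(n-1)L\approx\tfrac{n-1}{2}(2L)\approx(n-1)\eps$, so $\tau(K_n)\approx n\eps$ is trivial. If $n$ is even then $(n-1)L\approx L\oplus(n-2)L\approx L\oplus(n-2)\eps$; setting $\eta:=L$ yields $\tau(K_n)\approx\eta\oplus(n-1)\eps$ with $w_1(\eta)=R$, whence $2\tau(K_n)\approx 2\eta\oplus(2n-2)\eps\approx 2n\eps$. The main obstacle, as I see it, is isolating the fact that $2L$ is trivial; once the equivariant section $z\mapsto z_n$ is identified, the rest is bookkeeping driven by the parity of $n-1$.
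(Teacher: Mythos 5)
Your proof is correct, and it reaches the theorem by a noticeably different route than the paper. The paper works with the mapping-torus model $K_n\approx(T^{n-1}\times I)/(z,0)\sim(\zbar,1)$, writes down the total space of $\tau(K_n)$ explicitly, and trivializes it by hand using the rotation matrices $\bigl(\begin{smallmatrix}\cos(\pi s)&-\sin(\pi s)\\ \sin(\pi s)&\cos(\pi s)\end{smallmatrix}\bigr)$ applied to pairs of the negated fiber coordinates; the leftover negated coordinate when $n$ is even is identified directly as the line bundle $\eta$ of the double cover. You instead work with the double-cover model $K_n=T^n/\sigma$, use the left-invariant trivialization to see $d\sigma=\on{diag}(-1,\ldots,-1,+1)$, and read off the eigenbundle decomposition $\tau(K_n)\approx(n-1)L\oplus\eps$ before doing anything else; your key lemma, that $2L\approx L_{\C}$ is trivial because $z\mapsto z_n$ is a nowhere-zero equivariant section, is then a clean standalone statement. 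The two key steps are secretly the same fact: under the homeomorphism of models ($z_n=e^{\pi i s}$), your trivializing section of $L_\C$ is exactly the paper's path of rotations from $-I_2$ to $I_2$. What your version buys is a more structural statement of why the cancellation happens (inversion on $T^{n-1}$ acts by $-1$ on each line, and these lines pair up into a trivial complex line bundle), at the cost of the completely explicit coordinate formulas the paper displays; the paper's version makes the trivialization concrete enough to feed directly into the explicit immersion constructed later in the same section. All the steps you use (translations acting trivially on left-invariant fields, the underlying real bundle of $L\otimes_\R\C$ being $2L$, a complex line bundle with a nowhere-zero section being trivial, and the final parity bookkeeping) are sound.
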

\begin{proof} Using model (\ref{def3}), $\tau(K_n)$ has total space
$$(\R^n\times (S^1)^{n-1}\times I)/(t_1,\ldots,t_n,z_1,\ldots,z_{n-1},0)\sim(-t_1,\ldots,-t_{n-1},t_n,\zbar_1,\ldots,\zbar_{n-1},1).$$
If $n$ is odd, an isomorphism to the trivial bundle,
$$(\R^n\times (S^1)^{n-1}\times I)/(t_1,\ldots,t_n,z_1,\ldots,z_{n-1},0)\sim(t_1,\ldots,t_{n-1},t_n,\zbar_1,\ldots,\zbar_{n-1},1),$$
is given by sending
$$\bigl(\tbinom{t_1}{t_2},\ldots,\tbinom{t_{n-2}}{t_{n-1}},t_n,z_1,\ldots,z_{n-1},s\bigr)$$
to
$$\bigl(\bigl(\begin{smallmatrix}\cos(\pi s)&-\sin(\pi s)\\ \sin(\pi s)&\cos(\pi s)\end{smallmatrix}\bigr)\tbinom{t_1}{t_2},\ldots,\bigl(\begin{smallmatrix}\cos(\pi s)&-\sin(\pi s)\\ \sin(\pi s)&\cos(\pi s)\end{smallmatrix}\bigr)\tbinom{t_{n-2}}{t_{n-1}},t_n,z_1,\ldots,z_{n-1},s\bigr).$$
If $n$ is even, there is a similar isomorphism from $\tau(K_n)$ to $(n-1)\eps\oplus\eta$, where $\eta$ has total space
$$(\R\times (S^1)^{n-1}\times I)/(t,z_1,\ldots,z_{n-1},0)\sim(-t,\zbar_1,\ldots,\zbar_{n-1},1),$$
with the $t$ corresponding to $t_{n-1}$. This is the line bundle  associated to the double cover, with $w_1=R$. Twice this bundle is trivial, using the same rotation matrices as above.\end{proof}

We quickly deduce the span and immersion dimension of $K_n$.
\begin{cor} The span of $K_n$ (i.e., maximal number of linearly independent vector fields) is $n$ if $n$ is odd, and $n-1$ if $n$ is even. For all $n$, $K_n$ immerses in $\R^{n+1}$.
\end{cor}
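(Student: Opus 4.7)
The plan is to deduce both statements directly from Theorem \ref{tangbdl}, together with one short application of Hirsch's immersion theorem.

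For the span, when $n$ is odd, the triviality of $\tau(K_n)$ immediately produces $n$ pointwise-linearly-independent sections, so $\on{span}(K_n)=n$. When $n$ is even, the splitting $\tau(K_n)\cong\eta\oplus(n-1)\eps$ supplies $n-1$ independent sections from the trivial summand, giving $\on{span}(K_n)\ge n-1$. For the matching upper bound, note that $\on{span}(K_n)=n$ would force $\tau(K_n)$ to be trivial and in particular $w_1(\tau(K_n))=0$; but Proposition \ref{tang} gives $w_1(\tau(K_n))=R\neq0$ in the even case, a contradiction. Hence $\on{span}(K_n)=n-1$ for $n$ even.

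For the immersion, I will use the standard consequence of the Hirsch--Smale theorem: an $n$-manifold $M$ immerses in $\R^{n+k}$ provided $\tau(M)$ admits a rank-$k$ bundle inverse, i.e.\ a rank-$k$ bundle $\nu$ with $\tau(M)\oplus\nu$ trivial of rank $n+k$. I will exhibit a line bundle $\nu$ that works in each parity. When $n$ is odd, take $\nu=\eps$; triviality of $\tau(K_n)$ gives $\tau(K_n)\oplus\eps\cong(n+1)\eps$. When $n$ is even, take $\nu=\eta$; using Theorem \ref{tangbdl} and the fact, recorded in its proof, that $2\eta$ is trivial, we obtain
$$\tau(K_n)\oplus\eta\cong\eta\oplus(n-1)\eps\oplus\eta\cong 2\eta\oplus(n-1)\eps\cong 2\eps\oplus(n-1)\eps=(n+1)\eps.$$
In either case Hirsch yields an immersion $K_n\looparrowright\R^{n+1}$.

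There is no real obstacle here: the corollary is essentially a bookkeeping consequence of Theorem \ref{tangbdl}. The only mild care needed is to invoke the triviality of $2\eta$ (rather than just $2\tau(K_n)$) when building the normal line bundle in the even case, and to cite Proposition \ref{tang} to rule out $\on{span}=n$ when $n$ is even.
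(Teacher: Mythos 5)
Your proof is correct and follows essentially the same route as the paper: both read the span off Theorem \ref{tangbdl} (using $w_1\ne0$ to cap the even case) and both apply Hirsch's theorem with normal line bundle $\eps$ for $n$ odd and $\eta$ for $n$ even, the triviality of $\tau(K_n)\oplus\eta$ resting on the triviality of $2\eta$ noted in the proof of Theorem \ref{tangbdl}. Your write-up just makes these two verifications slightly more explicit than the paper does.
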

\begin{proof} Since the span is the dimension of the largest trivial subbundle, that part is immediate from Theorem \ref{tangbdl}. By Hirsch's Theorem (\cite{Hir}), $K_n$ immerses in $\R^{n+1}$ since there is a 1-dimensional vector bundle over it, $\eps$ if $n$ is odd and $\eta$ if $n$ is even, whose sum with the tangent bundle is trivial.\end{proof}

We can obtain an explicit immersion of $K_n$ in $\R^{n+1}$, analogous to the familiar picture of the Klein bottle $K_2$ in $\R^3$.  We use the following lemma, presumably well known.
\begin{lem}\label{emb} Let $\thbar=(\theta_1,\ldots,\theta_{n-1})$ with $\theta_i\in\R\mod 2\pi$. A parametrization $(x_1(\thbar),\ldots,x_{n}(\thbar))$ of an embedding of $T^{n-1}$ in $\R^{n}$ satisfying $x_1(-\thbar)=x_1(\thbar)$ and $x_i(-\thbar)=-x_i(\thbar)$ for $2\le i\le n$ can be given as follows. Choose positive real numbers $r_i$, $1\le i\le n-1$, satisfying $r_i>\ds\sum_{j>i}r_j$.
Let
\begin{eqnarray*}w_{n}&=&r_{n-1}\\
w_i&=&r_{i-1}+w_{i+1}\cos(\theta_i)\text{ for }1<i\le n-1\\
x_i&=&w_i\sin(\theta_{i-1})\text{ for }1<i\le n\\
x_1&=&w_2\cos(\theta_1).\end{eqnarray*}\end{lem}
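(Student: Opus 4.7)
The plan is to interpret the formulas as the classical construction of a torus of revolution iterated $n-1$ times: the $(x_1,x_2)$-coordinates trace a circle of varying radius $w_2$ around the $x_1=x_2=0$ axis, where $w_2$ is itself $r_1$ plus the first coordinate of an $(n-2)$-dimensional analogue. The condition $r_i>\ssum_{j>i}r_j$ is exactly what prevents successive tubes from self-intersecting. I will verify three things: (i) each $w_i$ is strictly positive for every $\thbar$; (ii) the map is injective; and (iii) the stated symmetry relations hold. Since $T^{n-1}$ is compact and $\R^n$ is Hausdorff, (ii) already gives a topological embedding, and the smooth inversion formulas produced in (ii) upgrade this to a smooth embedding.

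For (i) I would use reverse induction on $i$ starting from $w_n=r_{n-1}$. The bound $|w_{i+1}\cos\theta_i|\le w_{i+1}$ together with the recursion $w_i=r_{i-1}+w_{i+1}\cos\theta_i$ yields
$$w_i\le r_{i-1}+w_{i+1}\le \ssum_{j=i-1}^{n-1}r_j\qquad\text{and}\qquad w_i\ge r_{i-1}-w_{i+1}\ge r_{i-1}-\ssum_{j=i}^{n-1}r_j,$$
and the hypothesis $r_{i-1}>\ssum_{j>i-1}r_j$ makes the lower bound strictly positive.

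For (ii), given $(x_1,\ldots,x_n)$ in the image I would recover the angles by iterated polar decomposition. Since $x_1^2+x_2^2=w_2^2$ with $w_2>0$, the value $w_2=\sqrt{x_1^2+x_2^2}$ is forced, after which the equations $w_2\cos\theta_1=x_1$ and $w_2\sin\theta_1=x_2$ pin down $\theta_1\in\R/2\pi\Z$. For $i\ge 2$, the pair $w_i-r_{i-1}=w_{i+1}\cos\theta_i$ and $x_{i+1}=w_{i+1}\sin\theta_i$ together with $w_{i+1}>0$ yields $w_{i+1}=\sqrt{(w_i-r_{i-1})^2+x_{i+1}^2}$ and then a unique $\theta_i$; iterating recovers all of $\thbar$ uniquely. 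Since each $w_i$ is bounded below by a positive constant, these inversion formulas are smooth in a neighborhood of the image, so the resulting embedding is smooth, not merely topological.

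For (iii), a short reverse induction shows that each $w_i$ depends on $\thbar$ only through the cosines $\cos\theta_i,\ldots,\cos\theta_{n-1}$, hence $w_i(-\thbar)=w_i(\thbar)$. Then $x_1(-\thbar)=w_2(\thbar)\cos(-\theta_1)=x_1(\thbar)$, while for $i\ge 2$ one has $x_i(-\thbar)=w_i(\thbar)\sin(-\theta_{i-1})=-x_i(\thbar)$. The main obstacle is really the positivity estimate in (i); once that is nailed down, the polar inversion in (ii) is forced at every step and the symmetry check in (iii) is immediate.
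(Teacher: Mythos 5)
Your proof is correct, and it is more explicit than the one in the paper, though it rests on the same geometric picture. The paper argues by induction on $n$: the parametrized $T^{n-2}$ (built from $r_2,\ldots,r_{n-1}$ and $\theta_2,\ldots,\theta_{n-1}$) is placed in the $x_1x_3\cdots x_n$-hyperplane, translated by $r_1$ in the $x_1$-direction, and rotated about the $x_3\cdots x_n$-plane, which produces exactly the recursion $x_1=(r_1+x_1')\cos\theta_1$, $x_2=(r_1+x_1')\sin\theta_1$; the condition $r_1>\sum_{j>1}r_j$ keeps the translated torus in the half-space $x_1>0$ so that the revolution does not self-intersect. The paper leaves the injectivity and the symmetry claims implicit in this description. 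You instead verify everything directly and non-inductively: your two-sided estimate $r_{i-1}-\sum_{j\ge i}r_j\le w_i\le\sum_{j\ge i-1}r_j$ is precisely the quantitative form of ``stays in the positive half-space at every stage,'' your iterated polar inversion $w_{i+1}=\sqrt{(w_i-r_{i-1})^2+x_{i+1}^2}$ makes the injectivity (and, via the smooth local inverse, the immersion property) explicit, and the symmetry check is immediate since each $w_i$ depends only on cosines. What the paper's route buys is brevity and the geometric picture of nested surfaces of revolution, which is reused later (e.g.\ in Remark \ref{rmk} and the immersion of $K_n$); what your route buys is a complete, self-contained verification of the embedding property, including the smoothness of the inverse, which the paper's sketch takes for granted. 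No gaps.
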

For example, if $n=4$,
\begin{eqnarray*}
x_1&=&(r_1+(r_2+r_3\cos(\theta_3))\cos(\theta_2))\cos(\theta_1)\\
x_2&=&(r_1+(r_2+r_3\cos(\theta_3))\cos(\theta_2))\sin(\theta_1)\\
x_3&=&(r_2+r_3\cos(\theta_3))\sin(\theta_2)\\
x_4&=&r_3\sin(\theta_3).\end{eqnarray*}

\begin{proof}[Proof of Lemma \ref{emb}] The proof is by induction on $n$. Assume known for $n-1$. Take the parametrized $T^{n-2}$, using $r$-values $r_2,\ldots,r_{n-1}$ and  $\theta$ values $\theta_2,\ldots,\theta_{n-1}$, in the $x_1x_3\cdots x_{n}$-plane. Translate it by $r_1$ units in the $x_1$ coordinate. Rotate it around the $x_3\cdots x_{n }$-plane. All $x_i$ with $i\ge3$ remain unaffected, while
\begin{eqnarray*}x_1&=&(r_1+x_1')\cos(\theta_1)\\
x_2&=&(r_1+x_1')\sin(\theta_1),\end{eqnarray*}
where $x_1'$ is the $x_1$-value before translating.
\end{proof}
Note that the maximum $x$ in the embedding of Lemma \ref{emb} is $r_1+\cdots+r_{n-1}$.

\begin{rmk}\label{rmk}{\rm By varying $r_{n-1}$ through an appropriate range of values, we can obtain a family of smoothly embedded disjoint $T^{n-1}$'s in $\R^n$. For example, if $D\le r_{n-1}\le2D$, and $r_i=2^{n-i}D$ for $i<n-1$ we have disjointly embedded $T^{n-1}$'s with maximum $x$ ranging from $(2^n-3)D$ to $(2^n-2)D$.}\end{rmk}

Next we review the parametrization of the Klein bottle given in \cite{Fr}. This uses the curve
$$\a(t)=\la 5\sin(t),2\sin^2(t)\cos(t),0\ra,\qquad0\le t\le\pi$$ as directrix. Note that $\a(\pi)=\a(0)$ and $\a'(\pi)=-\a'(0)$. This curve passes through the center of the band in Figure \ref{band}. Orthogonal to the directrix are unit vectors $J(t)=\la-v_2,v_1,0\ra$ if $\a'(t)/\|\a'(t)\|=\la v_1,v_2,0\ra$. Note that $J(\pi)=-J(0)$. With $J(t)=\la j_1,j_2,0\ra$ and
\begin{equation}\label{r}r(t)=\tfrac12-\tfrac1{15}(2t-\pi)\sqrt{t(\pi-t)},\end{equation}
the immersed Klein bottle is parametrized by
$$k(\theta,t)=\a(t)+r(t)\la j_1\cos(\theta),j_2\cos(\theta),\sin(\theta)\ra,\qquad0\le \theta\le2\pi,\ 0\le t\le \pi.$$
In Figure \ref{band}, we illustrate the projection onto the $x_1x_2$-plane. There are circles of radius $r(t)$ perpendicular to the $x_1x_2$-plane, with each of the indicated lines as diameters.

\medskip
\begin{minipage}[h]{6in}
\begin{fig}
\label{band}

\begin{center}
\includegraphics[width=4in,height=3in]{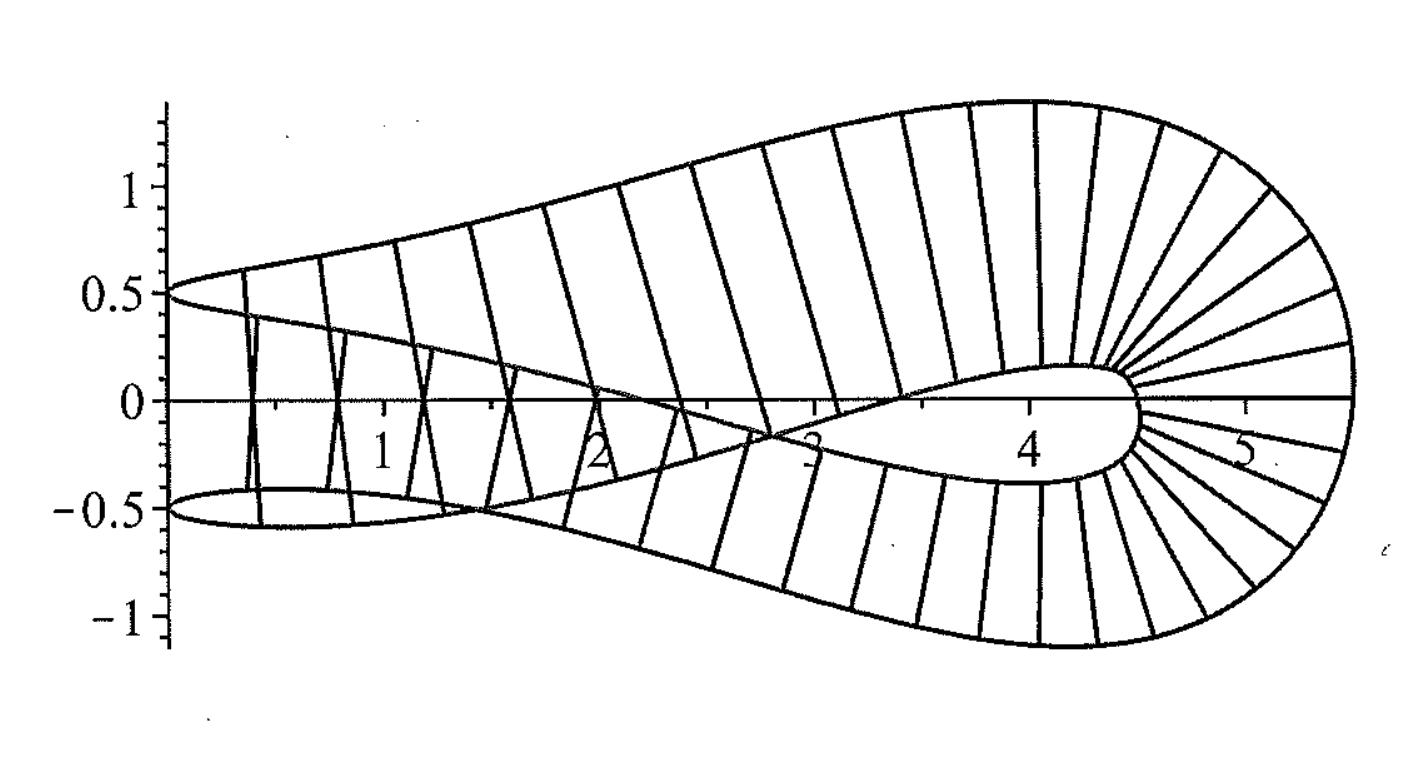}
\end{center}
\end{fig}
\end{minipage}
\medskip

We will make a similar immersion of $K_n$ in $\R^{n+1}$ by placing disjoint $T^{n-1}$'s above lines similar to those in Figure \ref{band}. As $t$ varies from $\pi-\epsilon$ to $\pi$, and then from 0 to $\epsilon$, the values $r(t)$ will be varying, and we wish the associated $T^{n-1}$'s in the $\R^n$ sitting above the appropriate segments to be disjoint. To this end, we must change the formula (\ref{r}) slightly.  If the $\frac1{15}$ in (\ref{r}) is replaced by a number $d$, then a calculus exercise shows that $\frac12-\frac{\pi^2}4 d\le r(t)\le\frac12+\frac{\pi^2}4 d$  for all $t$. By choosing $d=2/(\pi^2(2^{n+1}-5))$, this interval of $r(t)$ values that we will encounter has the property that if we choose the values $r_i$ which determine an embedding of $T^{n-1}$ in $\R^n$ as in Remark \ref{rmk} with $D=1/(2^{n+1}-5)$, then distinct values of $r(t)$ will have disjointly embedded $T^{n-1}$'s with maximum $x$ equal to $r(t)$.

For $\frac12-\frac1{2(2^{n+1}-5)}\le s\le \frac12+\frac1{2(2^{n+1}-5)}$, let $(x_1(s,\thbar),\ldots,x_{n}(s,\thbar))$ be the embedding of $T^{n-1}$ in $\R^{n}$  in Lemma \ref{emb} with $r_{n-1}=s-\frac12+\frac3{2(2^{n+1}-5)}$ and $r_i=2^{n-i}/(2^{n+1}-5)$ for $1\le i\le n-2$. Now with $\a(t)=\la5\sin(t),2\sin^2(t)\cos(t),0\ldots,0\ra$,  $j_1(t)$ and $j_2(t)$ as above, and $$r(t)=\frac12-\frac2{\pi^2(2^{n+1}-5)}(2t-\pi)\sqrt{t(\pi-t)},$$ our parametrization of an immersion of $K_{n}$ in $\R^{n+1}$ is given by
\begin{equation}\label{immeq}k_{n}(\thbar,t)=\a(t)+\la x_1(r(t),\thbar)j_1(t),x_1(r(t),\thbar)j_2(t),x_2(r(t),\thbar),\ldots,x_{n}(r(t),\thbar)\ra,\end{equation}
for $0\le\theta_i\le 2\pi,\ 0\le t\le\pi$.
We have $k_{n}(\thbar,0)=-k_{n}(-\thbar,\pi)$, which makes it a model of $K_{n}$. The $[0,\pi]$ that we use for $t$ here corresponds to the $[0,1]$ in (\ref{def3}); we use $[0.\pi]$ primarily for consistency with \cite{Fr}.

To see that this is locally an embedding, note that $k_n(T^{n-1}\times t)\subset J_t\times \R^{n-1}$, where $J_t\subset\R^2$ is the segment with endpoints $\la5\sin t,2\sin^2t\cos t\ra\pm\la r(t)j_1(t),r(t)j_2(t)\ra$.
For nearby values of $t\in(0,\pi)$, the segments $J_t$ are disjoint, and $T^{n-1}$ is embedded in the $n$-dimensional space $J_t\times\R^{n-1}$. For a small positive $t$ and a $t$ just less than $\pi$, the segments $J_t$ are not disjoint, but the values of $r(t)$ vary in this small neighborhood of $t=0$, and so the $T^{n-1}$'s are disjoint, due to our choice of the values of $r_i$.

We can expand (\ref{immeq}) to an explicit embedding of $K_n$ in $\R^{n+2}$ by
\begin{equation}\label{expemb}f(\thbar,t)=\la k_{n}(\thbar,t),\sin(2t)\ra.\end{equation}
This is an embedding since the only points where $k_n(\thbar,t)=k_n(\thbar',t')$ have $\sin(2t)$ and $\sin(2t')$ with opposite signs.

We improve this when $n$ is odd in the following result, which benefited from a discussion with Ryan Budney.
\begin{thm} \label{embthm} If $n$ is odd, then $K_n$ can be embedded in $\R^{n+1}$.\end{thm}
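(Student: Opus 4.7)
The plan is to upgrade the $\R^{n+2}$-embedding $f(\thbar,t)=\la k_n(\thbar,t),\sin(2t)\ra$ of (\ref{expemb}) to an $\R^{n+1}$-embedding by absorbing the extra coordinate $\sin(2t)$ into an intrinsic rotation of the $T^{n-1}$-fiber, which is available precisely because $n-1$ is even when $n$ is odd.

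First I would set up the internal twist. Since $n-1$ is even, the orthogonal complement of $J_t$ inside $J_t\times\R^{n-1}$ decomposes into $(n-1)/2$ orthogonal $2$-planes; pick one such $2$-plane (say the span of the last two coordinate axes) and let $S_t\in SO(n-1)$ be the rotation by angle $\pi t$ in that $2$-plane and the identity on its orthogonal complement. Note that $S_0=I$ and $S_\pi$ negates both chosen coordinates. I would then replace (\ref{immeq}) by
\[
\tilde k_n(\thbar,t)=\alpha(t)+\la x_1(r(t),\thbar)j_1(t),\,x_1(r(t),\thbar)j_2(t),\,S_t\bigl(x_2(r(t),\thbar),\ldots,x_n(r(t),\thbar)\bigr)\ra.
\]

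The first substantive step is to check that $\tilde k_n$ descends to a well-defined map on $K_n$. The relation $k_n(\thbar,0)=-k_n(-\thbar,\pi)$ came from the even/odd parities of the $x_i$ in Lemma \ref{emb} together with $j_2(\pi)=-j_2(0)$; inserting the factor $S_t$ introduces an extra sign change on exactly two of the odd-indexed coordinates at $t=\pi$, and these combine with the remaining signs so that, because $n-1$ is even, the new map satisfies $\tilde k_n(\thbar,0)=\tilde k_n(-\thbar,\pi)$, exactly the identification needed for $K_n$ in model (\ref{def3}).

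Next I would verify that $\tilde k_n$ is a global embedding. Local injectivity is immediate because $S_t$ is an isometry of each fiber, so the argument of the paper just below (\ref{immeq}) still shows $\tilde k_n(T^{n-1}\times\{t\})$ is an embedded $T^{n-1}$ in $J_t\times\R^{n-1}$. For global injectivity one uses the key fact from (\ref{expemb}): every collision pair of the original $k_n$ has $\sin(2t)\sin(2t')<0$, hence $t$ and $t'$ differ enough that the rotation angles $\pi t$ and $\pi t'$ place the two fiber tori in genuinely different angular positions inside the chosen $2$-plane; thus $S_t$ separates each collision pair that $\sin(2t)$ separated in $\R^{n+2}$.

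The main obstacle is the last point: a rotation of a $2$-plane is not the same as translation along a new axis, so one must rule out the possibility that $S_t$ and $S_{t'}$ merely permute the two tori within themselves rather than pulling them apart. This can fail on the locus where the last two fiber coordinates both vanish, so the careful part of the proof is to check that the collision pairs identified by the $\sin(2t)$-analysis never lie in this degenerate locus. I expect this to be handled by adjusting the radii $r_i$ in Lemma \ref{emb} as in Remark \ref{rmk} — keeping the last two radii small compared to the others forces any collision pair of $k_n$ to have nontrivial last two coordinates, where $S_t$ acts faithfully and the separation argument goes through, producing the desired embedding of $K_n$ in $\R^{n+1}$.
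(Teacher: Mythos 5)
Your modification does not actually produce $K_n$: the gluing at $t=0,\pi$ fails, and no choice of the rotation $S_t$ can repair it. Since $\alpha(0)=\alpha(\pi)=0$, $j(0)=\la 0,1\ra$ and $j(\pi)=\la 0,-1\ra$, and $r(0)=r(\pi)=\tfrac12$, one computes directly that
$\tilde k_n(\thbar,0)=\la 0,\,x_1(\thbar),\,x_2(\thbar),\ldots,x_n(\thbar)\ra$ while, using the parities $x_1(-\thbar)=x_1(\thbar)$ and $x_i(-\thbar)=-x_i(\thbar)$ from Lemma \ref{emb},
$\tilde k_n(-\thbar,\pi)=\la 0,\,-x_1(\thbar),\,-S_\pi\bigl(x_2(\thbar),\ldots,x_n(\thbar)\bigr)\ra$.
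These disagree in the second ambient coordinate for every $\thbar$ with $x_1(\thbar)\ne 0$, whatever $S_\pi$ is: a rotation of the $(x_2,\ldots,x_n)$-block cannot touch the sign that $j_2(\pi)=-j_2(0)$ forces on the $x_1$-coordinate. (Note that the paper's identity $k_n(\thbar,0)=-k_n(-\thbar,\pi)$ has an \emph{ambient} minus sign; it is exactly this sign that your twist was supposed to absorb, and it cannot.) One can also see the obstruction structurally: the identification your map induces at the ends is a diagonal sign change $(\eps_1,\ldots,\eps_n)$ on $(x_1,\ldots,x_n)$ with $\eps_1=-1$ and $(\eps_2,\ldots,\eps_n)=S_\pi$. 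The induced map on $H_1(T^{n-1})$ is $-I$ (as required for the mapping torus to be $K_n$) only if $\eps_2=+1$ and $\eps_3=\cdots=\eps_n=-1$, so $\det S_\pi=(-1)^{n-2}=-1$ for $n$ odd; such an $S_\pi$ is not connected to the identity in $O(n-1)$, hence unreachable by any path of rotations. With your single $2$-plane rotation the end identification reflects only the circles $\theta_1,\theta_{n-2},\theta_{n-1}$, and the quotient is $K_4\times T^{n-4}$, not $K_n$. (Also, "rotation by angle $\pi t$" with $t\in[0,\pi]$ gives $S_\pi=$ rotation by $\pi^2$; you presumably mean angle $t$.)

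The separation step has an independent gap. At a double point $k_n(\thbar,t)=k_n(\thbar',t')$ the common fiber vector $v=(x_2,\ldots,x_n)$ is replaced by $S_tv$ and $S_{t'}v$, which still coincide whenever $v$ lies in the fixed subspace of the rotation $S_{t'}^{-1}S_t$, i.e.\ whenever $x_{n-1}=x_n=0$; the double-point set is $(n-1)$-dimensional while $\{x_{n-1}=x_n=0\}$ has codimension $2$ in each fiber torus, so such points are to be expected, and shrinking $r_{n-2},r_{n-1}$ does not move the locus $\theta_{n-2},\theta_{n-1}\in\{0,\pi\}$ off the double-point set. Worse, the embedded torus of Lemma \ref{emb} is invariant under negating its last two coordinates, so $S_{t'}$ carries the $t'$-fiber torus onto itself as a set and the two colliding tori are not pulled apart. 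The paper's proof is of an entirely different kind and sidesteps all of this: it never parametrizes $K_n$, but instead realizes $K_n$ as the circle bundle of $\eta\oplus\eps$ over $K_{n-1}$, embeds $K_{n-1}$ in $\R^{n+1}$, identifies the normal bundle of that embedding with $\eta\oplus\eps$ (line bundles are classified by $w_1$, and $w_1(\zeta)=w_1(K_{n-1})=R=w_1(\eta)$ by Proposition \ref{tang}, using $n-1$ even), and takes $K_n$ to be the boundary of the resulting tubular neighborhood. If you want an explicit embedding, implement that: thicken the embedded $K_{n-1}\subset\R^{n+1}$ and take the boundary of the thickening, rather than twisting the fibers of the $K_n$-immersion.
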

\begin{proof} We observe that $K_n$ is the total space of the $S^1$-bundle over $K_{n-1}$ associated to the 2-plane bundle $\eta\oplus\eps$, where $w_1(\eta)=R$. Since $n-1$ is even, the tangent bundle $\tau_{n-1}$ has $w_1=R$ by Proposition \ref{tang}. Thus so does the tubular neighborhood $\zeta$ of the immersion of $K_{n-1}$ in $\R^n$ constructed above, when interpreted as a line bundle, since $\tau_{n-1}\oplus\zeta$ is trivial. Since line bundles are classified by $w_1$, we deduce that $\eta$ and $\zeta$ are isomorphic. If the immersion is expanded to an embedding in $\R^{n+1}$ as in (\ref{expemb}), the tubular neighborhood is just $\zeta\oplus\eps$, as it just adds a component in the new direction. Let $h$ be a bundle isomorphism of the disk bundle $D$ of $\eta\oplus\eps$ to that of $\zeta\oplus\eps$, interpreted as a subset of $\R^{n+1}$. The restriction of $h$ to the boundary of $D$ is our desired embedding of $K_n$ in $\R^{n+1}$.
\end{proof}

If $n$ is even, $K_n$ cannot be embedded in $\R^{n+1}$ since $w_1\ne0$. Combining these observations with the above theorem, we have the following corollary.
\begin{cor} The smallest Euclidean space in which $K_n$ can be embedded is $\R^{n+1}$ if n is odd, and $\R^{n+2}$ if $n$ is even.\end{cor}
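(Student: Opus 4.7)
The plan is to assemble the corollary from the constructive embeddings already obtained together with standard obstruction-theoretic lower bounds. For the upper bounds nothing new is required: Theorem \ref{embthm} supplies an embedding $K_n\hookrightarrow\R^{n+1}$ for $n$ odd, and the map $f$ of (\ref{expemb}) embeds $K_n$ in $\R^{n+2}$ for every $n$, which in particular handles the even case.

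For the lower bounds, I would first observe that $K_n$ is a closed $n$-manifold and so cannot embed in $\R^m$ for any $m\le n$: any such embedding would be open by invariance of domain, but also compact and hence closed, so its image would be a nonempty clopen subset of $\R^m$, which is impossible. This disposes of the odd case, reducing the question to whether $K_n\hookrightarrow\R^{n+1}$ can occur when $n$ is even.

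It remains to rule out $K_n\hookrightarrow\R^{n+1}$ for even $n$, which is the observation made immediately before the corollary. The standard argument runs as follows. By Proposition \ref{tang}, $w_1(\tau(K_n))=R\ne 0$ when $n$ is even, so $K_n$ is non-orientable. On the other hand, any closed $n$-manifold $M$ embedded as a hypersurface in $\R^{n+1}$ is orientable: Alexander duality with $\zt$ coefficients shows that $\R^{n+1}\setminus M$ has two components, so $M$ is two-sided and its normal line bundle is trivial; then $\tau M\oplus\eps\cong\tau\R^{n+1}|_M$ is trivial, forcing $w_1(\tau M)=0$. Applied to $M=K_n$ this contradicts $w_1(\tau K_n)\ne 0$. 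The only mildly subtle step is this orientability of a codimension-one closed submanifold of Euclidean space, which is classical; no new machinery is needed.
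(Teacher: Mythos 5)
Your proof is correct and follows essentially the same route as the paper, which obtains the upper bounds from Theorem \ref{embthm} and the explicit embedding (\ref{expemb}), and rules out $K_n\hookrightarrow\R^{n+1}$ for even $n$ by the nonvanishing of $w_1$ from Proposition \ref{tang}. You have merely filled in the classical facts the paper leaves implicit (invariance of domain for the trivial lower bound, and orientability of closed hypersurfaces in Euclidean space), and those details are accurate.
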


\section{Splitting of $\Sigma K_n$}\label{shtsec}
In this section, we obtain an explicit splitting of $\Sigma K_n$ as a wedge of spheres and mod-2 Moore spaces.
 Throughout, $SX$ denotes unreduced suspension, and $\Sigma X$ reduced suspension. Cones are always reduced.
 We begin with a lemma.
\begin{lem}\label{h} Let $h$ be a self-homeomorphism of a pointed space $W$, and $$Z=(W\times I)/(w,0)\sim(h(w),1),$$ where the relation applies to all $w\in W$. Then
$$\Sigma Z\simeq S^2\vee \MC(-\Sigma h\vee1:\Sigma W\to\Sigma W),$$ where this latter map refers to the composite $$\Sigma W\mapright{p}\Sigma W\vee \Sigma W\mapright{-\Sigma h\vee1}\Sigma W,$$ where $(-\S h)([t,w])=[1-t,h(w)]$.
\end{lem}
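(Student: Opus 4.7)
My plan is to split off the $S^2$ summand using a retraction from the basepoint circle in $Z$, and then identify $\Sigma(Z/S^1)$ with the stated mapping cone by realizing $Z/S^1$ as a pointed homotopy pushout and extracting a Puppe cofiber sequence.

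First, since $h$ is pointed we have $(*,0)\sim(h(*),1)=(*,1)$, so the basepoint track $\{*\}\times I\subset W\times I$ descends to an embedded circle $S^1\hookrightarrow Z$, and the projection $Z\to S^1=I/\partial I$ given by $[w,t]\mapsto t$ is a retraction. Suspending, $S^2=\Sigma S^1\hookrightarrow \Sigma Z$ has a retraction, and since $\Sigma Z$ is a co-H-space a split cofiber sequence of this form decomposes as a wedge, giving $\Sigma Z\simeq S^2\vee\Sigma(Z/S^1)$.

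Next, set $\widetilde Z=Z/S^1$. I would model $\widetilde Z$ as the pointed homotopy pushout
\[
\widetilde Z\simeq W\cup_{W\vee W}(W\wedge I_+),
\]
where $W\vee W\hookrightarrow W\wedge I_+$ is the inclusion of the two ends $W\times\{0,1\}$ into the half-smash, and the other leg $W\vee W\xrightarrow{(h,\mathrm{id})}W$ encodes the relation $(w,0)\sim(h(w),1)$. The top cofibration has cofiber $(W\wedge I_+)/(W\vee W)=\Sigma W$, so by the pushout principle the induced cofibration $W\hookrightarrow\widetilde Z$ also has cofiber $\Sigma W$, yielding a cofiber sequence $W\to\widetilde Z\to\Sigma W$. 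Its Puppe extension produces $\Sigma W\xrightarrow{\delta}\Sigma W\to\Sigma\widetilde Z$, so $\Sigma\widetilde Z\simeq\mathrm{MC}(\delta)$.

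To identify $\delta$, naturality of Puppe gives $\delta=\Sigma(h,\mathrm{id})\circ\alpha$, where $\alpha:\Sigma W\to\Sigma W\vee\Sigma W$ is the Puppe connecting map of $W\vee W\hookrightarrow W\wedge I_+$. Careful tracking of orientations — the two $I$-endpoints have opposite orientations when read off as a suspension coordinate — shows $\alpha$ represents $i_1-i_2$ rather than the naive pinch $i_1+i_2$, whence $\delta=(\Sigma h)\vee 1\circ(i_1-i_2)=\Sigma h-1$ in $[\Sigma W,\Sigma W]$. Since $\mathrm{MC}(f)\simeq\mathrm{MC}(-f)$, we obtain $\Sigma\widetilde Z\simeq\mathrm{MC}(1-\Sigma h)=\mathrm{MC}(-\Sigma h\vee 1)$; combining with the splitting above yields the claim.

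The main obstacle is the sign in $\alpha$: the naive computation yields the pinch $i_1+i_2$, which would give the wrong cofiber $\mathrm{MC}(1+\Sigma h)$. One must carefully account for the orientation reversal between the $t=0$ and $t=1$ ends of $I$ (which is exactly what the lemma's explicit formula $(-\Sigma h)([t,w])=[1-t,h(w)]$ bakes in), and it is this sign that distinguishes the correct attaching map and validates the lemma's specific choice of $-\Sigma h\vee 1$.
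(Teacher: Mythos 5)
Your argument is correct, but it takes a genuinely different route from the paper's. The paper collapses a fiber copy of $W$ (via $w\mapsto[w,\tfrac12]$) and then builds completely explicit homotopy equivalences $Z/W\simeq S^1\vee SW$ and $Z\cup CW\simeq Z/W$, reading off the attaching map $-\Sigma h\vee 1$ (and the $S^2$, as the cone on the collapsed $S^1$ summand) by direct computation with coordinates. You instead collapse the ``orthogonal'' subspace, the basepoint circle, splitting off $S^2$ at the outset via the retraction $[w,t]\mapsto t$ and the co-H structure on $\Sigma Z$, and then present $Z/S^1$ as the homotopy pushout of $W\xleftarrow{(h,\mathrm{id})}W\vee W\hookrightarrow W\wedge I_+$; cobase-change of cofibers plus naturality of the Puppe connecting map then identifies $\Sigma(Z/S^1)$ as the mapping cone of $\Sigma(h,\mathrm{id})\circ\alpha$, where $\alpha$ is $W\wedge(-)$ applied to the connecting map of $\{0,1\}_+\hookrightarrow I_+$. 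Your approach is more conceptual and shorter, at the cost of pushing the entire content of the lemma into the single sign claim $\alpha=i_1-i_2$ (rather than the pinch $i_1+i_2$); you correctly flag this as the crux and state the right answer, but you assert it rather than verify it --- one should either cite the standard computation of the connecting map for $(I,\partial I)$ (its effect on $H_1\to H_0$ is $[1]-[0]$) or check it by hand, which is essentially what the paper's explicit formulas for $j'\circ k$ accomplish. Granting that, your identification $\MC(\Sigma h-1)\simeq\MC(1-\Sigma h)$ via $\MC(f)\cong\MC(-f)$ matches the paper's $\MC(-\Sigma h\vee1)$, so the two proofs agree; what the paper's hands-on version buys is that the sign is never in doubt, while yours generalizes painlessly to any mapping torus presented as such a pushout.
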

\begin{proof} We consider the cofiber sequence induced by the map $W\to Z$ defined by $w\mapsto[w,\frac12]$, so $\Sigma Z$ has the homotopy type of the mapping cone of the collapse map $Z\cup CW\mapright{c}\Sigma W$. We precede $c$ by a homotopy equivalence $Z/W\mapright{j} Z\cup CW$. This map
$$\frac{W\times I}{(w,0)\sim(h(w),1),W\times\frac12}\mapright{j}\frac{W\times I\cup C(W\times\frac12)}{(w,0)\sim(h(w),1)}$$
can be defined by $$j([w,t])=\begin{cases}[w,2t]&0\le t\le\frac14\\
[4t-1,w,\frac12]&\frac14\le t\le\frac12\\
[3-4t,w,\frac12]&\frac12\le t\le\frac34\\
[w,2t-1]&\frac34\le t\le 1.\end{cases}$$
The composite $c\circ j:\dfrac{W\times I}{(w,0)\sim(h(w),1),W\times\frac12}\to \Sigma W$ sends
$$[w,t]\mapsto\begin{cases}*&0\le t\le\frac14\\
[4t-1,w]&\frac14\le t\le\frac12\\
[3-4t,w]&\frac12\le t\le\frac34\\
*&\frac34\le t\le 1,\end{cases}$$
and this is homotopic to
$$j':[w,t]\mapsto\begin{cases}[2t,w]&0\le t\le \frac12\\
[2-2t,w]&\frac12\le t\le1.\end{cases}$$

 There is a homotopy equivalence $k:S^1\vee SW\to \dfrac{W\times I}{(w,0)\sim(h(w),1),W\times\frac12}$ defined by
 $$[t]\mapsto\begin{cases}[w_0,t+\frac12]&0\le t\le \frac12\\ [w_0,t-\frac12]&\frac12\le t\le1,\end{cases}\qquad[t,w]\mapsto\begin{cases}[h(w),t+\frac12]&0\le t\le\frac12\\ [w,t-\frac12]&\frac12\le t\le1.\end{cases}$$
To see that $k$ is a homotopy equivalence, write it (up to a slight reparametrization of $S^1$) as
$$S^1\vee SW\mapright{k'}\frac{W\times[0,\frac12]\cup I\cup W\times[\frac12',1]}{(w,0)\sim(hw,1),(w,\frac12)\sim0,1\sim(w,\frac12')}\mapright{c'}\frac{W\times I}{(w,0)\sim(h(w),1),W\times\frac12},$$
where $k'([t,w])=k([t,w])$, $c'$ collapses $I$, and
$$k'([t])=\begin{cases}[2t]&0\le t\le\frac12\\
[w_0,2t-\frac12]&\frac12\le t\le\frac34\\
[w_0,2t-\frac32]&\frac34\le t\le1.\end{cases}$$
This $k'$ is a homotopy equivalence by Whitehead's Theorem, as VanKampen/Mayer-Vietoris imply that it  induces an isomorphism in $\pi_1$ and $H_*$, and $c'$ is a homotopy equivalence since it collapses a contractible subspace.

The composite $j'\circ k:S^1\vee SW\to\Sigma W$ sends $S^1$ to the basepoint and $[t,w]$ to $\begin{cases}[1-2t,h(w)]&0\le t\le\frac12\\ [2t-1,w]&\frac12\le t\le 1.\end{cases}$
Thus it factors through $c_{w_0}\vee(-\Sigma h\vee1):S^1\vee \Sigma W\to \Sigma W$. Hence
$$\Sigma Z\simeq \MC(c\circ j)\simeq\MC(j')\simeq\MC(j'\circ k)\simeq S^2\vee\MC(-\Sigma h\vee1).$$
\end{proof}

We easily deduce the following corollary.
\begin{cor}\label{fg} If $f:X\to X$ and $g:Y\to Y$ are homeomorphisms of pointed spaces, and $Z=(X\times Y\times I)/(x,y,0)\sim(f(x),g(y),1)$, then
$$\Sigma Z\simeq S^2\vee C_X\vee C_Y\vee C_{X\w Y},$$
where
\begin{eqnarray*}C_X&=&\MC(-\Sigma f\vee1:\S X\to \S X)\\
C_Y&=&\MC(-\S g\vee1:\S Y\to \S Y)\\
C_{X\w Y}&=&\MC(-\S f\w g\vee1:\S X\w Y\to\S X\w Y).\end{eqnarray*}
\end{cor}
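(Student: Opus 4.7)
The plan is to derive the corollary from Lemma \ref{h} applied to $W=X\times Y$ with $h=f\times g$, and then decompose the resulting mapping cone using the natural stable splitting of a product.

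First, Lemma \ref{h} gives $\Sigma Z \simeq S^2 \vee \MC(\alpha)$, where $\alpha$ denotes the self-map of $\Sigma(X\times Y)$ obtained by precomposing the pinch map with $-\Sigma(f\times g)\vee 1$. Next, recall the classical James--Milnor splitting $\sigma : \Sigma(X\times Y)\simeq \Sigma X \vee \Sigma Y \vee \Sigma(X\w Y)$, whose three summands come from the suspensions of the two inclusions of the factors together with a natural section exhibiting $\Sigma(X\w Y)$ as a wedge summand. Crucially, $\sigma$ is natural in the pair $(X,Y)$, so under $\sigma$ the map $\Sigma(f\times g)$ corresponds to $\Sigma f \vee \Sigma g \vee \Sigma(f\w g)$.

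Because $\sigma$ is built entirely from projections, inclusions, and smash maps, it respects the co-H-space structure on the suspensions involved: both the pinch map on $\Sigma(X\times Y)$ and the co-inverse $-$ (reversing the suspension coordinate) commute with $\sigma$. Consequently $\alpha$ corresponds under $\sigma$ to the wedge
$$(-\Sigma f\vee 1)\vee (-\Sigma g\vee 1)\vee (-\Sigma(f\w g)\vee 1),$$
and since the mapping cone of a wedge of maps is the wedge of their mapping cones, we obtain $\MC(\alpha)\simeq C_X\vee C_Y\vee C_{X\w Y}$, which combined with the $S^2$ from Lemma \ref{h} yields the claimed splitting.

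The only subtle point is the compatibility of $\sigma$ with the co-H-structure used to form $-\Sigma(-)\vee 1$; it is a routine verification but is the main place where one must be careful not to use $\sigma$ as an opaque black box. Once this is in hand, the rest of the argument is formal wedge-decomposition of mapping cones.
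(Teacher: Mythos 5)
Your proposal is correct and follows essentially the same route as the paper: apply Lemma \ref{h} to $h=f\times g$, transport the resulting self-map of $\Sigma(X\times Y)$ across the natural splitting $\Sigma(X\times Y)\simeq\Sigma X\vee\Sigma Y\vee\Sigma(X\w Y)$ (the paper records the compatibility you flag as a commutative diagram with the pinch maps and the equivalences $q$, $q\vee q$), and then identify the mapping cone of the resulting wedge of maps with the wedge of mapping cones.
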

\begin{proof} We apply Lemma \ref{h} to $f\times g:X\times Y\to X\times Y$, using the following commutative diagram, in which vertical maps are homotopy equivalences.
$$\begin{CD}\S(X\times Y)@>p>>\S(X\times Y)\vee\S(X\times Y)@>-\S(f\times g)\vee1>>\S(X\times Y)\\
@VV qV @VV q\vee q V @VV qV\\
\S X\vee\S Y\vee\S X\w Y @>p>> B @>b>> \S X\vee\S Y\vee \S X\w Y,\end{CD}$$
where $B=\S X\vee\S X\vee\S Y\vee\S Y\vee(\S X\w Y)\vee(\S X\w Y)$ and $$b=(-\S f\vee1)\vee(-\S g\vee1)\vee(-\S f\w g\vee1).$$ By Lemma \ref{h}, $\Sigma Z$ has the homotopy type of the one-point union of $S^2$ with the cofiber of the first horizontal composite, which has the homotopy type of the cofiber of the second horizontal composite, and this is what is claimed in this corollary.\end{proof}

This corollary can, of course, be iterated to a product of many spaces. Our space $K_{n}$ is the space $Z$ in the iteration of the corollary applied to the map $T^{n-1}\mapright{h} T^{n-1}$ which is the reflection map $z\mapsto\zbar$ in each factor. Note that
$$\S T^{n-1}\simeq \bigvee_{R\subset[\![n-1]\!]}\S S^{|R|},$$
where $R$ ranges over all nonempty subsets of $[\![n-1]\!]=\{1,\ldots,n-1\}$. The map $-\S h\vee1:\S S^{|R|}\to\S S^{|R|}$ has degree $1-(-1)^{|R|}$. The corollary says that our $\S K_{n}$ has the homotopy type of
$$S^2\vee\bigvee_{i=1}^{n-1}\tbinom {n-1}i\MC(S^{i+1}\mapright{1-(-1)^i}S^{i+1}),$$
from which the following splitting result follows immediately.
\begin{thm}\label{split} There is a homotopy equivalence
$$\S K_{n}\simeq S^2\vee\bigvee_{\text{even }i>0}\tbinom {n-1}i(S^{i+1}\vee S^{i+2})\vee\bigvee_{\text{odd }i}\tbinom {n-1}i M^{i+2}(2),$$
where $M^{i+2}(2)$ denotes the mod-2 Moore space $S^{i+1}\cup_2 e^{i+2}$.\end{thm}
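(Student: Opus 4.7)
The plan is to take as the starting point the intermediate identification stated in the paragraph preceding the theorem, namely
$$\S K_n \simeq S^2 \vee \bigvee_{i=1}^{n-1}\tb{n-1}{i}\MC\l(S^{i+1}\xrightarrow{\,1-(-1)^i\,}S^{i+1}\r),$$
and then to finish by identifying each mapping cone in closed form according to the parity of $i$. That intermediate formula itself arises from iterating Corollary \ref{fg} applied to the conjugation involution on each $S^1$ factor of $T^{n-1}$, combined with the James--Milnor suspension splitting $\S T^{n-1}\simeq\bigvee_R \S S^{|R|}$ indexed by nonempty subsets $R\subseteq [\![n-1]\!]$, together with the observation that conjugation on $S^1$ has degree $-1$ and hence induces degree $(-1)^{|R|}$ on the smash summand $S^{|R|}$, so that $-\S h\vee 1$ has degree $1-(-1)^{|R|}$ on the corresponding wedge summand $\S S^{|R|}\simeq S^{|R|+1}$.

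The identification of the mapping cones splits into two cases. If $i$ is even, then $1-(-1)^i=0$, and the relevant mapping cone is that of a nullhomotopic self-map of $S^{i+1}$, which is $S^{i+1}\vee\S S^{i+1}=S^{i+1}\vee S^{i+2}$, using the general fact that $\MC(\ast\colon X\to Y)\simeq Y\vee\S X$. If $i$ is odd, then $1-(-1)^i=2$, and the mapping cone is that of the degree-$2$ self-map of $S^{i+1}$, which is by definition the mod-$2$ Moore space $M^{i+2}(2)=S^{i+1}\cup_2 e^{i+2}$. Since there are exactly $\tb{n-1}{i}$ subsets $R\subseteq[\![n-1]\!]$ with $|R|=i$, substituting these identifications into the intermediate expression and reindexing by the parity of $i$ yields the asserted wedge decomposition.

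The step requiring the most care is the naturality of the James--Milnor splitting with respect to the product self-map $h=(\bar{\cdot},\ldots,\bar{\cdot})$ of $T^{n-1}$. This is standard: the splitting is built from natural projection and collapse maps $X_1\times\cdots\times X_{n-1}\to X_{r_1}\w\cdots\w X_{r_k}$, which commute with any product map $f_1\times\cdots\times f_{n-1}$. Consequently $\S h$ respects the wedge decomposition of $\S T^{n-1}$ and acts by the advertised degree on each summand, and the mapping cone of $-\S h\vee 1$ distributes over the wedge to produce the wedge of individual mapping cones displayed above. The main conceptual obstacle is precisely this naturality check; everything else is bookkeeping with binomial coefficients.
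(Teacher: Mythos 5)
Your proposal is correct and follows essentially the same route as the paper: the paper's own argument consists precisely of the intermediate wedge-of-mapping-cones identification in the paragraph preceding the theorem (obtained by iterating Corollary \ref{fg} together with the suspension splitting of $\S T^{n-1}$ and the degree computation for conjugation), followed by the same parity case analysis identifying $\MC(S^{i+1}\xrightarrow{0}S^{i+1})\simeq S^{i+1}\vee S^{i+2}$ and $\MC(S^{i+1}\xrightarrow{2}S^{i+1})\simeq M^{i+2}(2)$. Your explicit attention to the naturality of the splitting under the product map is a point the paper leaves implicit, but it is the same argument.
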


\section{$K_n$ as a planar polygon space}\label{TCsec}
In this section, we explain how the spaces $K_n$ initially came to our attention as planar polygon spaces, and what we might hope to discover about their topological complexity. Recall that $\Mbar(\ell)=\Mbar(\ell_1,\ldots,\ell_n)$ is the space of planar polygons with side lengths $\ell_1,\ldots,\ell_n$, identified under isometry. If $\ell$ is generic, then $\Mbar(\ell)$ is an
$(n-3)$-manifold, and hence satisfies $\TC(\Mbar(\ell))\le 2n-5$.

We may assume $\ell_1\le\cdots\le\ell_n$.  A subset of $[\![n]\!]$ is {\it short} if $\ds\sum_{i\in S}\ell_i<\sum_{i\not\in S}\ell_i$. The {\it genetic code} of $\ell$ is the set of maximal (under an ordering of subsets based on inclusion of sets and size of numbers) elements (called {\it genes}) in the set of short subsets of $[\![n]\!]$ which contain $n$. The homeomorphism type of $\Mbar(\ell)$ is determined by its genetic code. A {\it gee} is a gene with the $n$ omitted.

The polygon space $\Mbar(1^{n-1},n-2)$ is homeomorphic to $RP^{n-3}$(see \cite{DP}), whose topological complexity is usually 1 greater than its immersion dimension(see \cite{FTY}), and this is known to often be much less than $2n-6$. Its genetic code is $\la\{n\}\ra$, so its set of gees is $\{\emptyset\}$. The polygon space $\Mbar(0^{n-3},1,1,1)$ is homeomorphic to $T^{n-3}$(see \cite{geo}). This uses the convention that 0-lengths represent edges the sum of whose lengths is less than 1. Its topological complexity is $n-2$(see \cite{F}), and its genetic code is $\la\{n,n-3,n-4,\ldots,1\}\ra$.

In \cite{D2}, it is proved that for any set of gees except $\{\emptyset\}$, the associated set of $n$-gons has topological complexity $\ge 2n-6$ for sufficiently large $n$, by exhibiting
elements $x_1,\ldots,x_{2n-7}\in H^1(\Mbar(\ell);\zt)$ such that \begin{equation}\label{zcl}\Prod_{i=1}^{2n-7}(x_i\ot 1+1\ot x_i)\ne0\in H^*(\Mbar(\ell)\times \Mbar(\ell);\zt).\end{equation} It is also shown that for $n\le8$, excluding the $RP^{n-3}$ and $T^{n-3}$ cases discussed above, the only genetic codes for which we cannot find classes satisfying (\ref{zcl}) are those with a single gene $\{7,3,2,1\}$, $\{7,5,2,1\}$, $\{8,4,3,2,1\}$, or $\{8,6,3,2,1\}$.  There are more than 2600 genetic codes with $n\le 8$.

The genetic code with single gene $\{n,n-4,n-5,\ldots,1\}$ is closest to that of the torus, and would seem to be the best candidate to have topological complexity less than $2n-6$. It is realized by the length vector $(0^{n-4},1,1,1,2)$. By \cite[Prop 2.1]{geo}, this space is homeomorphic to the space $K_{n-3}$ defined by (\ref{def1}).  We will show in Proposition \ref{zclprop}
that for $K_{n-3}$ the largest nonzero product of the form (\ref{zcl}) has $n-1$ factors, and so all we can deduce is $\TC(K_{n-3})\ge n$. The original goal of this project was to try to decrease the gap ($n$ to $2n-5$) for $\TC(K_{n-3})$.

Here is the result  that is useful in obtaining lower bounds for topological complexity. It is convenient to denote $V\ot1+1\ot V$ by $\Vbar$.
\begin{prop}\label{zclprop} In $H^*(K_n\times K_n;\zt)$, $\Vbar_1^3\Vbar_2^2\Vbar_3\cdots \Vbar_{n-1}\ne0$, but any product of at least $n+3$ terms of the form $\Vbar_i$ or $\Rbar$ is 0.
\end{prop}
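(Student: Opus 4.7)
The plan is to exploit the multiplicative structure of $H^*(K_n;\zt)$ from Theorem \ref{cohthm}, especially the relations $R^2=0$, $V_i^2=RV_i$, and hence $V_i^3=0$. Working over $\zt$, these immediately give the ``ground rules'' $\Rbar^2=0$, $\Vbar_i^2=RV_i\ot 1+1\ot RV_i$, $\Vbar_i^3=RV_i\ot V_i+V_i\ot RV_i$, and $\Vbar_i^4=0$, which govern both parts.

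For the nonvanishing claim, I would first expand $\Vbar_1^3\Vbar_2^2$ directly: the cross terms in which two $R$'s would land on the same tensor side are killed by $R^2=0$, leaving $\Vbar_1^3\Vbar_2^2=RV_1\ot RV_1V_2+RV_1V_2\ot RV_1$. Writing $T=\{3,\ldots,n-1\}$ and $\Vbar_3\cdots\Vbar_{n-1}=\sum_{S\subseteq T}V_S\ot V_{T\setminus S}$, the full product expands as
$$\sum_{S\subseteq T}\bigl(RV_1V_S\ot RV_1V_2V_{T\setminus S}+RV_1V_2V_S\ot RV_1V_{T\setminus S}\bigr).$$
Each summand is a tensor of two basis monomials of $H^*(K_n;\zt)$ (one $R$ and distinct $V_i$'s on each side), and the pair consisting of the left $V$-index set and whether $V_2$ appears on the left determines both $S$ and which of the two sums one is in. Hence the $2^{n-2}$ summands are distinct basis vectors of $H^*(K_n\times K_n;\zt)$, and the sum is nonzero.

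For the vanishing of products with at least $n+3$ factors, commutativity (automatic over $\zt$) lets me collect any such product as $\Vbar_1^{a_1}\cdots\Vbar_{n-1}^{a_{n-1}}\Rbar^b$, with $b\le 1$ and each $a_i\le 3$ (else the product is already $0$ via $\Rbar^2$ or $\Vbar_i^4$). Let $A=\#\{i:a_i\in\{2,3\}\}$. In every monomial appearing in the expansion, each $\Vbar_i^{a_i}$ with $a_i\in\{2,3\}$, and $\Rbar^b$ when $b=1$, contributes an $R$ to exactly one of the two tensor factors, while any side carrying two $R$'s is $0$. If $A+b\ge 3$, the pigeonhole principle kills every term. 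Otherwise $A+b\le 2$, and the total factor count is bounded by $3A+(n-1-A)+b=2A+n-1+b\le n+3$, with equality only in the case $A=2$, $b=0$ with both ``large'' factors of type $\Vbar_i^3$. The final step is to note that
$$\Vbar_i^3\Vbar_j^3=2\,RV_iV_j\ot RV_iV_j=0\quad\text{over }\zt$$
(the two ``off-diagonal'' cross terms are equal, the two ``diagonal'' ones vanish by $R^2=0$), so this remaining case is also zero.

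The main obstacle is precisely the boundary configuration $\Vbar_i^3\Vbar_j^3\prod_{k\ne i,j}\Vbar_k$: it attains the critical count $n+3$ and slips past the pigeonhole argument on $R$'s, so the argument has to produce the extra mod-$2$ cancellation inside $\Vbar_i^3\Vbar_j^3$ by hand. Everything else reduces to careful bookkeeping of how the $R$'s can be distributed between the two tensor sides.
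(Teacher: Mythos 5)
Your proof is correct and follows essentially the same route as the paper: compute the closed forms of $\Rbar^2$, $\Vbar_i^2$, $\Vbar_i^3$, $\Vbar_i^4$, use a counting argument to reduce the vanishing claim to the single critical configuration $\Vbar_i^3\Vbar_j^3\prod_{k\ne i,j}\Vbar_k$, and dispose of that by the explicit mod-$2$ cancellation $\Vbar_i^3\Vbar_j^3=2\,RV_iV_j\ot RV_iV_j=0$. The only differences are cosmetic: the paper's count tracks the $V$-indices forced to appear on both tensor factors (hence forced to come from a $\Vbar_i^3$), whereas yours tracks the $R$'s forced by each factor $\Vbar_i^{\ge2}$ or $\Rbar$, and your nonvanishing argument is slightly more complete, exhibiting all $2^{n-2}$ surviving terms as distinct basis vectors rather than isolating one uncancellable term.
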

\begin{proof} The expansion of the stated product includes the term
$$V_1V_2^2V_3\cdots V_{n-2}\ot V_1^2V_{n-1}=RV_1\cdots V_{n-2}\ot RV_1V_{n-1}$$
and no other terms that can cancel it.

Let $P_i$ denote a product of $i$ distinct $V$'s.
 In the expansion of the product of $n+k$ $\Vbar_i$'s (some repeated) in bidegree $(d,n+k-d)$, a term $RP_{d-1}\ot RP_{n+k-d-1}$ has $n+k-2$ $V_i$'s occurring, so at least $k-1$ must appear on both sides of the $\ot$. These can only be obtained from $\Vbar^j$ for $j$ not a 2-power, hence $j\ge3$. Such a term has degree $\ge 3(k-1)+(n-1-(k-1))=n+2k-3$.
Therefore $n+k\ge n+2k-3$, so $k\le 3$. Using $P_d$ instead of $RP_{d-1}$, or including an $\Rbar$ factor would similarly imply the stronger result $k\le 2$.
If $k=3$, it would have to be (after possible reindexing)
\begin{eqnarray*}\Vbar_1^3\Vbar_2^3\Vbar_3\cdots\Vbar_{n-1}&=&V_1^2V_2V_3\cdots V_{d-1}\ot V_1V_2^2V_d\cdots V_n\\
&&+V_1V_2^2V_3\cdots V_{d-1}\ot V_1^2V_2V_d\cdots V_n\\
&=&0.\end{eqnarray*}
\end{proof}

One might hope to improve the lower bound using cohomology with local coefficients, as was done for $K_2$ in \cite{CV}. Ordinary integral cohomology won't help since $H^1(K_n\times K_n)$ is spanned by $R\otimes 1$ and $1\otimes R$. On the other hand, one might hope to improve the upper bound by finding motion planning rules similar to those used for the torus, where in each factor we follow the geodesic if the points are not antipodal and move counterclockwise if they are. The domains of continuity for this algorithm are the sets of pairs of points with a fixed number of antipodal components. So far, we have not been able to obtain an improvement of either type.

 \def\line{\rule{.6in}{.6pt}}


\begin{thebibliography}{99}
\bibitem{CV} D.Cohen and L.Vandembroucq, {\em Topological complexity of the Klein bottle}, {\tt arXiv} 1612.03133.
\bibitem{LS} O.Cornea, G.Lupton, J.Oprea, and D.Tanr\'e, {\em Lusternik-Schnirelmann category}, Mathematical Surveys and Monographs, Amer Math Soc {\bf 103} (2003).
\bibitem{D2} D.M.Davis, {\em Topological complexity (within 1) of the space of isometry classes of planar $n$-gons for sufficiently large $n$}, JP Jour of Geom and Topology  {\bf 20} (2017) 1--26.
    \bibitem{DP} \line, {\em Real projective space as a planar polygon space}, Morfismos {\bf 19} (2005) 1--6.
\bibitem{F} M.Farber, {\em Topological complexity of motion planning}, Discrete Comput Geom {\bf 29} (2003) 211--221.
\bibitem{FTY} M.Farber, S.Tabachnikov, and S.Yuzvinsky, {\em Topological robotics: motion planning in projective spaces}, Int Math Res Notes {\bf 34} (2003) 1853--1870.
\bibitem{Fr} G.Franzoni, {\em The Klein bottle in its classical shape: a further step towards a good parametrization}, {\tt arXiv} 0909.5354.
\bibitem{geo} J.-C.Hausmann, {\em Geometric descriptions of polygon and chain spaces}, Contemp Math Amer Math Soc {\bf 438} (2007) 47-57.
\bibitem{HK} J.-C.Hausmann and A.Knutson, {\em The cohomology rings of polygon spaces}, Ann Inst Fourier (Grenoble) {\bf 48} (1998) 281--321.
\bibitem{Hir} M.W.Hirsch, {\em Immersion of manifolds}, Trans Amer Math Soc {\bf 93} (1959) 242--276.
\bibitem{MS} J.W.Milnor and J.D.Stasheff, {\em Characteristic classes}, Annals of Math Studies, Princeton (1974).
\end{thebibliography}
\end{document}